\documentclass[11pt]{article}
\usepackage{amsmath}
\usepackage{amsthm}
\usepackage{amssymb}
\usepackage{amscd}
\usepackage{relsize}
\usepackage{enumerate}
\newtheorem{teo}{Theorem}
\newtheorem{prop}{Proposition}
\newtheorem{cor}[teo]{Corollary}
\DeclareMathOperator*{\esup}{ess\,sup}
\DeclareMathOperator*{\osc}{osc}
\DeclareMathOperator*{\einf}{ess\,inf}
\setlength{\topmargin}{-1.0 true cm}
\setlength{\textheight}{22.0cm}
\setlength{\textwidth}{14.5cm}
\setlength{\oddsidemargin}{-0.05cm}
\title{{\bf Sampling in unitary invariant subspaces associated to LCA groups}}
\author{
{\bf A.~G. Garc\'{\i}a}\thanks{E-mail:\texttt{agarcia@math.uc3m.es}}, \,\,
{\bf M.~A. Hern\'andez-Medina}\thanks{E-mail:\texttt{miguelangel.hernandez.medina@upm.es}}
{\bf\,\, and \,\, G. P\'erez-Villal\'on}\thanks{E-mail:\texttt{gperez@euitt.upm.es}}
}
\date{}
\begin{document}
\maketitle
\begin{itemize}
\item[*] Departamento de Matem\'aticas, Universidad Carlos III de Madrid,
 Avda. de la Universidad 30, 28911 Legan\'es-Madrid, Spain.
\item[\dag\ddag] Departamento de Matem\'atica Aplicada a las Tecnolog\'{\i}as de la Informaci\'on y las Comunicaciones, E.T.S.I.T., U.P.M.,
 Avda. Complutense 30, 28040 Madrid, Spain.
\end{itemize}
\begin{abstract}
In this paper a sampling theory for unitary invariant subspaces associated to locally compact abelian (LCA) groups is deduced. Working in the LCA group context allows to obtain, in a unified way, sampling results valid for a wide range of problems which are interesting in practice, avoiding also cumbersome notation. Along with LCA groups theory, the involved mathematical technique is that of frame theory which meets matrix analysis when appropriate dual frames are computed.
\end{abstract}
{\bf Keywords}: LCA groups; Unitary representation of a LCA group; Unitary-invariant subspaces; Frames; Dual frames; Sampling expansions.

\noindent{\bf AMS}: 22B05; 22D10; 42C15; 94A20.
\section{Introduction}
\label{section1}
This paper gives a general sampling theory  for abstract unitary invariant  subspaces in a Hilbert space $\mathcal{H}$. These subspaces are  constructed by using unitary representations of an LCA group in $\mathcal{H}$. Namely, for a fixed element $a\in \mathcal{H}$ and a unitary representation $h\mapsto U(h)$ of a discrete LCA  group $H$ (maybe a subgroup of a more general LCA group $G$), the invariant subspace $\mathcal{A}_a$ in $\mathcal{H}$ looks like  
\[
\mathcal{A}_a=\Big\{\sum_{h\in H}\alpha_h U(h)a\, :\, \{\alpha_h\}_{h\in H}\in \ell^2(H)\Big\}\,.
\] 
Roughly speaking, we are looking for sampling formulas in $\mathcal{A}_a$ taking into account its unitary invariant structure, i.e., having the form
\[
x=\sum_{j=1}^s \sum_{m \in M}\mathcal{L}_j x(m)\,U(m)c_j\quad \text{in $\mathcal{H}$}\,,
\]
where the generalized samples $\mathcal{L}_j x(m):=\langle x, U(m)b_j\rangle_\mathcal{H}$ are taken at an appropriate subgroup $M$ of $H$ from $s$ fixed elements $b_j\in \mathcal{H}$. The reconstruction process in $\mathcal{A}_a$ is carried out from $s$, not necessarily unique, elements $c_j\in \mathcal{A}_a$. In case the regular unitary representation, i.e.,  that given by the translation operator, the generalized samples are nothing but convolution (average) samples. 

As it was pointed out in \cite{ole:15}, the LCA group approach is not just a unified way of dealing with the four classical  groups $\mathbb{R}, \mathbb{Z}, \mathbb{T}, \mathbb{Z}_N$: signal processing often involves products of these groups which are also LCA groups. For example, multichannel video signal involves the group $\mathbb{Z}^d \times \mathbb{Z}_N$, where $d$ is the number of channels and $N$ the number of pixels of each image.

Besides, the relevance of sampling theory in unitary invariant subspaces of a Hilbert space $\mathcal{H}$  is  a recognized fact nowadays. Indeed, shift-invariant subspaces of $L^2(\mathbb{R})$ \cite{boor1:94,boor2:94,bownik:00,ron:95} or periodic extensions of finite signals \cite{garcia:16,garcia:15} provide remarkable examples where sampling theory plays a fundamental role. See, for instance, \cite{aldroubi:01,aldroubi:02,aldroubi:05,kang:11,sun:03,unser:94,unser:98,zhou:99} and the references therein for sampling results in shift invariant spaces. As a consequence, the availability of an abstract sampling theory for unitary invariant spaces becomes a useful tool to handle these problems in a unified way. Moreover, any notational complication is avoided especially in the multidimensional setting.

As the involved samples will be identified as frame coefficients in a suitable auxiliary Hilbert space (in our case the space $L^2(\widehat{H})$), the relevant mathematical technique is that of frame theory, including the computation of appropriate dual frames  taking care of the unitary invariant structure of the considered subspaces. For harmonic analysis on LCA groups we refer to the classical Refs. \cite{folland:95,hewitt:63,rudin:90}. It is worth to mention also the recent papers \cite{cabrelli:10,ole:15} that we have used throughout the paper. 

The paper is organized as follows: In Section \ref{section2} we include some needed preliminaries and we deduce a suitable expression for the (generalized) samples in terms of some special sequences in $L^2(\widehat{H})$; a complete characterization of  these sequences is proved in Proposition~\ref{caracterizacion}. Section \ref{section3} includes the main sampling result, i.e., Theorem \ref{regular}. In particular, we include an analogous of Kluv\'anek sampling theorem for $H$-shift-invariant subspaces of $L^2(G)$. We conclude the paper with a brief note on what we call the $G$-jitter error.

\section{The mathematical setting}
\label{section2}
Let $G$ be a second countable {\em locally compact abelian} (LCA) Hausdorff group with operation written additively. Let $M< H< G$ be countable (finite or countably infinite) {\em uniform lattices} in $G$. Recall that a  uniform lattice $K$ in $G$ is a discrete subgroup of $G$ such that the quotient group $G/K$ is compact (see, for instance, Ref.~\cite{cabrelli:10}). It is known that if $M< H$ are uniform lattices in $G$ then $H/M$ is a finite group (see \cite[Remark 2.2]{cabrelli:12}).

The dual group of the subgroup $H< G$, that is, the set of continuous characters on $H$ is denoted by $\widehat{H}$. Since $H$ is discrete,  its dual $\widehat{H}$ is compact. We assume that its Haar measure $m_{\widehat{H}}$ is normalized such that $m_{\widehat{H}}(\widehat{H})=1$. The value of the character $\gamma\in\widehat{H}$ at the point $h\in H$ is denoted by $(h,\gamma)\in\mathbb{T}$.  With this normalization of the Haar measure the sequence $\{\chi_h\}_{h\in H}$ defined by
\[
\widehat{H}\ni \gamma \mapsto \chi_h(\gamma)=(h,\gamma)\in \mathbb{T}
\]
turns out to be an orthonormal basis for $L^2(\widehat{H})$ (see, for instance, \cite[Prop.\,4.3]{folland:95}). 

Let $g\in G\mapsto U(g)\in\mathcal{U}(\mathcal{H})$ a {\em continuous unitary representation of $G$} in a complex separable Hilbert space $\mathcal{H}$. Therefore, the mapping $h\in H\mapsto U(h)\in\mathcal{U}(\mathcal{H})$ is a unitary representation of $H$ in $\mathcal{H}$. For a fixed $a\in\mathcal{H}$ let define the $U$-invariant subspace in $\mathcal{H}$
\[
\mathcal{A}_a:=\overline{\operatorname{span}}\big\{U(h)a\,:\, h\in H\big\}\subset \mathcal{H}\,.
\]
We assume that $\{U(h)a\}_{h\in H}$ is a Riesz sequence in $\mathcal{H}$; a necessary and sufficient condition can be found in \cite{barbieri:15, hernandez:10}. Thus, the subspace $\mathcal{A}_a$ can be expressed as
\[
 \mathcal{A}_a=\Big\{  \sum_{h\in H}\alpha_h U(h)a\, :\, \{\alpha_h\}_{h\in H}\in \ell^2(H)\Big\}\subset \mathcal{H}\,.
\]
As usual, $\{\alpha_h\}_{h\in H}\in \ell^2(H)$ means that $\sum_{h\in H} |\alpha_h|^2 <\infty$.
\subsection*{The isomorphism $\mathcal{T}_{H,a}$}
We define the isomorphism $\mathcal{T}_{H,a}$ which maps the orthonormal basis $\{\chi_h\}_{h\in H}$ for $L^2(\widehat{H})$ onto the Riesz basis $\{U(h)a\}_{h\in H}$ for $\mathcal{A}_a$, that is
\begin{equation}
\label{iso}
\begin{array}[c]{ccll}
\mathcal{T}_{H,a}: & L^2(\widehat{H}) & \longrightarrow & \mathcal{A}_a\\
       & \displaystyle{\sum_{h\in H} \alpha_h \chi_h} & \longmapsto & \displaystyle{\sum_{h\in H} \alpha_h U(h)a}
\end{array}
\end{equation}
This isomorphism $\mathcal{T}_{H,a}$ has the following simple but important {\em shifting property} with respect to the unitary representation:
\begin{prop}
For any $F \in L^2(\widehat{H})$ and $k\in H$, we have
\begin{equation}
\label{sp}
    \mathcal{T}_{H,a}(F\chi_k)=U(k)(\mathcal{T}_{H,a} F)
\end{equation}
\end{prop}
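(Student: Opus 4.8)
The plan is to reduce the identity to the orthonormal basis $\{\chi_h\}_{h\in H}$ and then extend by linearity and continuity. First I would record the elementary pointwise identity $\chi_h\chi_k=\chi_{h+k}$ on $\widehat{H}$, which follows from $(h+k,\gamma)=(h,\gamma)(k,\gamma)$ for every $\gamma\in\widehat{H}$; in particular, since $|\chi_k(\gamma)|=1$, multiplication by $\chi_k$ is a well-defined (indeed unitary) operator on $L^2(\widehat{H})$, so $F\chi_k\in L^2(\widehat{H})$ whenever $F\in L^2(\widehat{H})$.

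Next I would check the claimed formula on a single basis element: using the definition \eqref{iso} of $\mathcal{T}_{H,a}$ and the fact that $h\mapsto U(h)$ is a representation of $H$, one gets
\[
\mathcal{T}_{H,a}(\chi_h\chi_k)=\mathcal{T}_{H,a}(\chi_{h+k})=U(h+k)a=U(k)U(h)a=U(k)\big(\mathcal{T}_{H,a}\chi_h\big),
\]
which is exactly \eqref{sp} for $F=\chi_h$.

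Finally, for a general $F=\sum_{h\in H}\alpha_h\chi_h\in L^2(\widehat{H})$ with $\{\alpha_h\}\in\ell^2(H)$, I would expand $F\chi_k=\sum_{h\in H}\alpha_h\chi_{h+k}$ (the series converging in $L^2(\widehat{H})$, since multiplication by $\chi_k$ is unitary), apply the bounded operator $\mathcal{T}_{H,a}$ term by term, and then use boundedness of the unitary $U(k)$ to pull it out of the resulting $\ell^2$-convergent series in $\mathcal{A}_a$. Concretely, both $F\mapsto \mathcal{T}_{H,a}(F\chi_k)$ and $F\mapsto U(k)(\mathcal{T}_{H,a}F)$ are bounded linear maps $L^2(\widehat{H})\to\mathcal{A}_a$ (composites of bounded operators) that agree on the total set $\{\chi_h\}_{h\in H}$, hence they coincide on all of $L^2(\widehat{H})$.

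There is essentially no serious obstacle here: the only point requiring a word of care is the interchange of the bounded operators $\mathcal{T}_{H,a}$ and $U(k)$ with the (possibly infinite) $\ell^2$-sums, which is immediate from continuity. The representation property $U(h+k)=U(k)U(h)$ and the reindexing $h\mapsto h+k$ of the orthonormal basis do all the work.
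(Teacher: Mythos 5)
Your argument is correct and follows essentially the same route as the paper: both verify the identity via $\chi_h\chi_k=\chi_{h+k}$ and the representation property $U(h+k)=U(k)U(h)$ on the basis expansion of $F$, the paper doing so by a direct reindexing of the series and you by the equivalent (and slightly more explicitly justified) continuity-and-density argument. Nothing is missing.
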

\begin{proof}
Let $F=\sum_{h\in H} \alpha_h \chi_h$ in $L^2(\widehat{H})$. Then,
\[
\begin{split}
   \mathcal{T}_{H,a}(F\chi_k)&=\mathcal{T}_{H,a}(\sum_{h\in H}\alpha_h 	\chi_h\chi_k )=\sum_{l\in H} \alpha_{l-k}U(l)a \\
   &=\sum_{h\in H} \alpha_h U(k+h)a=U(k)(\mathcal{T}_{H,a} F)\,,
   \end{split}
\]
taking into account that $\chi_h \chi_k=\chi_{h+k}$ and the commutativity in the subgroup $H$.
\end{proof}
\subsection*{An expression for the generalized samples}
For any fixed $b\in \mathcal{H}$ we define the $U$-system $\mathcal{L}_b$ as a linear operator between $\mathcal{H}$ and $C(G,\mathbb{C})$, the space of continuous functions defined on $G$ taking values in $\mathbb{C}$ and given by
\[
\mathcal{H}\ni x \mapsto \mathcal{L}_bx\in C(G,\mathbb{C})\,\, \text{ such that }\,\, \mathcal{L}_bx(g):=\langle x, U(g)b\rangle_\mathcal{H},\quad g\in G\,.
\]

Suppose that $s$ vectors $b_j\in G$, $j=1,2,\dots,s$, are given and consider their associated $U$-systems denoted as
$\mathcal{L}_j:=\mathcal{L}_{b_j}$, $j=1,2,\dots,s$.

The main goal of this paper is the stable recovery of any $x\in \mathcal{A}_a$ from the sequence of its samples taken at the subgroup $M$, that is
\[
   \mathcal{L}_jx(m)=\langle x, U(m)b_j\rangle_\mathcal{H},\quad m\in M,\,\,\, j=1,2,\dots,s.
\]
First we obtain an alternative expression for the sample $\mathcal{L}_jx(m)$. For $x\in \mathcal{A}_a$, let $F$ be  the element in $L^2(\widehat{H})$ such that
$\mathcal{T}_{H,a}F=x$. For $j=1,2,\dots,s$ and $m\in M$ we have
\[
  \mathcal{L}_jx(m)=\langle x, U(m)b_j\rangle_\mathcal{H}=\Big\langle \sum_{h\in H} \alpha_h U(h)a, U(m)b_j\Big\rangle_\mathcal{H}
  =\sum_{h\in H} \alpha_h \overline{\big\langle U(m-h)b_j, a\big\rangle}_\mathcal{H}\,.
\]
Therefore, for any fixed $m\in M$ we have
\[
\begin{split}
\mathcal{L}_jx(m)&=\Big\langle F, \sum_{h\in H} \langle U(m-h)b_j, a\rangle_\mathcal{H}\, \chi_h\Big\rangle_{L^2(\widehat{H})}\\
&=
\Big\langle F,\Big(\sum_{k\in H} \langle U(k)b_j, a\rangle_\mathcal{H} \,\chi_{-k}\Big)\chi_m\Big\rangle_{L^2(\widehat{H})}\,,
\end{split}
\]
where $k=m-h$ runs over $H$. Hence, we obtain the expression
\begin{equation}
\label{samples}
\mathcal{L}_jx(m)=\big\langle F,\overline{G}_j\,\chi_m\big\rangle_{L^2(\widehat{H})},\quad m\in M,\,\,\, j=1,2,\dots,s\,,
\end{equation}
where the function $G_j\in L^2(\widehat{H})$ is given by
\begin{equation}
\label{g_j}
G_j=\sum_{h\in H} \mathcal{L}_ja(h)\,\chi_h, \quad j=1,2,\dots,s.
\end{equation}
It is worth to mention that one could consider the samples taken at any orbit of the subspace $M$, i.e., $\{\mathcal{L}_jx(g_0+m) \}_{m\in M;\, j=1,2,\dots,s}$, where $g_0$ is a fixed element in $G$.

As a consequence of expression \eqref{samples}, the recovery of any $x\in\mathcal{A}_a$ depends on the frame properties of the sequence
$\big\{\overline{G}_j\,\chi_m\big\}_{m\in M;\, j=1,2,\dots,s}$ in $L^2(\widehat{H})$. 

\subsection*{Frame properties of the system $\big\{\overline{G}_j\,\chi_m\big\}_{m\in M;\, j=1,2,\dots,s}$}

The results in this section remains true for arbitrary functions $G_j$ in $L^2(\widehat{H})$, $j=1, 2, \ldots,s$, not necessarily those given by \eqref{g_j}.
First, we need to introduce some necessary preliminaries. The {\em annihilator} of $M$ in $\widehat{H}$ is the closed subgroup
\[
  M^\bot=\{\gamma\in \widehat{H} \,:\, (m,\gamma)=1 \text{ for all $m$ in $M$ }\}
\]
The annihilator of $M$ in $\widehat{H}$ is a finite subgroup of $\widehat{H}$ since $M^\bot$ is isomorphic to $\widehat{H/M}$, and $H/M$ is finite. Let $r$ be the order of $M^\perp$ and set $M^\bot=\big\{\mu_0^\bot=0,\mu_1^\bot,\dots,\mu_{r-1}^\bot\big\}$.
There exists a measurable (Borel) {\em section $\Omega$} of $\widehat{H}/M^\bot$ (see the seminal Ref.~\cite{feldman:68}), i.e.,  a measurable set  $\Omega$ such that
\[
\widehat{H}= \bigcup_{n=0}^{r-1} (\mu^\bot_{n}+\Omega)\quad \text{and}\quad(\mu^\bot_{n}+\Omega) \, \cap \,(\mu^\bot_{n'}+\Omega)=\emptyset, \quad \text{for}\, \, n\neq n'\,.
\]
Notice that $m_{\widehat{H}}(\Omega)=1/r$.  Besides, the sequence $\{\chi_m\}_{m\in M}$ is an orthogonal basis for $L^2(\Omega)$ (see \cite[Prop. 2.16]{cabrelli:10}). 

Our analysis is based in the following expression:  For $F, G_j\in L^2(\widehat{H})$, $j=1,2,\ldots,s$, we have
\begin{equation}
\label{ole}
\big\langle F,\overline{G}_j\,\chi_m\big\rangle_{L^2(\widehat{H})}=\int_{\Omega} \mathbf{G}_j^\top(\xi)\mathbf{F}(\xi)
\overline{(m,\xi)} \,dm_{\widehat{H}}(\xi)=(\mathbf{1}_{\Omega} \mathbf{G}_j^\top\mathbf{F})^{\wedge}(m)\,,\quad m\in M
\end{equation}
where  $\mathbf{1}_{\Omega}$ denotes the characteristic function of $\Omega$, the hat denotes the Fourier transform on the group $\widehat{H}$, i.e., $f^{\wedge}(m)=\int_{\widehat{H}} f(\xi) \overline{(m,\xi)}\,dm_{\widehat{H}}(\xi)$ for any $f\in L^1(\widehat{H})\cap L^2(\widehat{H})$, and
\[
\begin{split}
\mathbf{F}(\xi)&=\big[F(\xi+\mu_0^\bot), F(\xi+\mu_1^\bot), \dots, F(\xi+\mu_{r-1}^\bot) \big]^\top,\\
\mathbf{G}_j(\xi)&=\big[G_j(\xi+\mu_0^\bot), G_j(\xi+\mu_1^\bot), \dots, G_j(\xi+\mu_{r-1}^\bot) \big]^\top, \quad \xi \in \Omega\,.
\end{split}
\]
Indeed, using that $\widehat{H}=\bigcup_{n=0}^{r-1} \big(\mu^\perp_{n}+\Omega \big)$, for any $m\in M$ we obtain  
\[
\begin{split}
\big\langle F,\overline{G}_j\,\chi_m\big\rangle_{L^2(\widehat{H})}&=
\int_{\widehat{H}} F(\xi)G_j(\xi)\overline{\chi_m(\xi)} \,dm_{\widehat{H}}(\xi)= \sum_{n=0}^{r-1}
\int_{\mu^\perp_{n}+\Omega} F(\xi)G_j(\xi)\overline{( m,\xi )} \,dm_{\widehat{H}}(\xi)\\
&=\int_{\Omega} \sum_{n=0}^{r-1} F(\xi+\mu^\perp_{n})G_j(\xi+\mu^\perp_{n})\overline{(m,\xi )} \,dm_{\widehat{H}}(\xi)\,.
\end{split}
\]
For $G_{j}\in L^2(\widehat{H})$, $j=1,2,\ldots,s$, consider the associated $s\times r$ matrix 
\begin{equation}
\label{Gmatrix}
\mathbb{G}(\xi):=
\begin{bmatrix} G_1(\xi)& G_1(\xi+\mu_1^\bot)&\cdots&G_1(\xi+\mu_{r-1}^\bot)\\
G_2(\xi)& G_2(\xi+\mu_1^\bot)&\cdots&G_2(\xi+\mu_{r-1}^\bot)\\
\vdots&\vdots&&\vdots\\
G_s(\xi)& G_s(\xi+\mu_1^\bot)&\cdots&G_s(\xi+\mu_{r-1}^\bot)
\end{bmatrix}=
\bigg[G_j\Big(\xi+\mu_k^\bot\Big)\bigg]_{\substack{j=1,2,\ldots,s \\ k=0,1,\ldots, r-1}}
\end{equation}
where $\xi$ is a element of $\Omega$ and its related constants
\[
\alpha_{\mathbb{G}}:=\einf_{\xi \in \Omega}\lambda_{\min}[\mathbb{G}^*(\xi)\mathbb{G}(\xi)]\quad \text{and} \quad
\beta_{\mathbb{G}}:=\esup_{\xi \in \Omega}\lambda_{\max}[\mathbb{G}^*(\xi)\mathbb{G}(\xi)]\,,
\]
where $\mathbb{G}^*(\xi)$ denotes the transpose conjugate of the matrix
$\mathbb{G}(\xi)$, and $\lambda_{\min}$ (respectively $\lambda_{\max}$) the
smallest (respectively the largest) eigenvalue of the positive semidefinite matrix
$\mathbb{G}^*(\xi)\mathbb{G}(\xi)$. Observe that
$0 \leq \alpha_{\mathbb{G}} \leq \beta_{\mathbb{G}} \leq \infty$.

\begin{prop}
\label{caracterizacion}
For $G_{j}\in L^2(\widehat{H})$, $j=1,2,\ldots,s$, consider the associated matrix $\mathbb{G}(\xi)$ given in \eqref{Gmatrix}. Then
\begin{enumerate}[(a)]
\item $\{\overline{G}_j\,\chi_m \}_{m\in M;\,j=1,2,\ldots,s}$ is a complete system for
$L^2(\widehat{H})$ if and only if the rank of $\mathbb{G}(w)$ is $r$,\,  a.e. in $\Omega$.
\item $\{\overline{G}_j\,\chi_m\}_{m\in M;\,j=1,2,\ldots,s}$ is a Bessel sequence for  $L^2(\widehat{H})$ if and only if $\beta_{\mathbb{G}}<\infty$. In this case, the optimal Bessel bound is $\beta_{\mathbb{G}}/r$.
\item $\{\overline{G}_j\,\chi_m\}_{m\in M;\,j=1,2,\ldots,s}$  is a frame for $L^2(\widehat{H})$ if and only if $0<\alpha_{\mathbb{G}}\le
\beta_{\mathbb{G}}<\infty$. In this case, the optimal frame bounds are $\alpha_{\mathbb{G}}/r$ and  $\beta_{\mathbb{G}}/r$.
\item  $\{\overline{G}_j\,\chi_m\}_{m\in M;\,j=1,2,\ldots,s}$  is a Riesz basis for $L^2(\widehat{H})$ if and only if it is a frame and $s=r$.
\end{enumerate}
\end{prop}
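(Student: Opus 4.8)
The plan is to fiberize over the section $\Omega$ and reduce all four assertions to pointwise linear algebra for the matrix $\mathbb{G}(\xi)$ in \eqref{Gmatrix}. Two isometric identifications do the work. First, since $\widehat H$ is the disjoint union of the translates $\mu_n^\bot+\Omega$, the map $F\mapsto\mathbf F$ is an isometric isomorphism of $L^2(\widehat H)$ onto $L^2(\Omega;\mathbb{C}^r)$; in particular $\|F\|^2_{L^2(\widehat H)}=\int_\Omega\|\mathbf F(\xi)\|^2_{\mathbb{C}^r}\,dm_{\widehat H}(\xi)$ and $\mathbf F$ ranges over all of $L^2(\Omega;\mathbb{C}^r)$. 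Second, since $m_{\widehat H}(\Omega)=1/r$, the family $\{\sqrt r\,\chi_m\}_{m\in M}$ is an orthonormal basis of $L^2(\Omega)$. Combining Parseval in $L^2(\Omega)$ with the identity \eqref{ole}, which says $\langle F,\overline G_j\chi_m\rangle_{L^2(\widehat H)}=\langle\mathbf G_j^\top\mathbf F,\chi_m\rangle_{L^2(\Omega)}$, I obtain the master formula
\[
\sum_{j=1}^s\sum_{m\in M}\big|\langle F,\overline G_j\chi_m\rangle_{L^2(\widehat H)}\big|^2=\frac1r\int_\Omega\big\|\mathbb{G}(\xi)\mathbf F(\xi)\big\|^2_{\mathbb{C}^s}\,dm_{\widehat H}(\xi),\qquad F\in L^2(\widehat H),
\]
and everything then follows by comparing the right-hand side with $\|F\|^2=\int_\Omega\|\mathbf F(\xi)\|^2\,dm_{\widehat H}(\xi)$.

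For (b) and (c): the Bessel inequality with bound $B$, respectively the lower frame inequality with bound $A$, is by the master formula equivalent to $\int_\Omega\|\mathbb{G}(\xi)\mathbf F(\xi)\|^2\le rB\int_\Omega\|\mathbf F(\xi)\|^2$, respectively $\ge rA\int_\Omega\|\mathbf F(\xi)\|^2$, for every $\mathbf F\in L^2(\Omega;\mathbb{C}^r)$; and this is in turn equivalent to the pointwise bounds $\lambda_{\max}[\mathbb{G}^*(\xi)\mathbb{G}(\xi)]\le rB$, respectively $\lambda_{\min}[\mathbb{G}^*(\xi)\mathbb{G}(\xi)]\ge rA$, for a.e.\ $\xi\in\Omega$. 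The sufficiency directions are immediate from the variational characterization of $\lambda_{\max}$ and $\lambda_{\min}$; for necessity I test with $\mathbf F=\mathbf 1_E\,v$ over sets $E\subset\Omega$ of positive measure and constant vectors $v\in\mathbb{C}^r$ --- a countable dense family of unit vectors suffices on the $\lambda_{\max}$ side, while on the $\lambda_{\min}$ side one takes $v=v(\xi)$ a measurably chosen unit eigenvector of $\mathbb{G}^*(\xi)\mathbb{G}(\xi)$ for its least eigenvalue on the exceptional set. Choosing $B=\beta_{\mathbb{G}}/r$ and $A=\alpha_{\mathbb{G}}/r$ yields the optimal bounds, which proves (b) and (c).

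For (a): $F$ is orthogonal to all the $\overline G_j\chi_m$ iff $\langle\mathbf G_j^\top\mathbf F,\chi_m\rangle_{L^2(\Omega)}=0$ for every $m,j$, i.e., since $\{\chi_m\}_{m\in M}$ spans $L^2(\Omega)$, iff $\mathbb{G}(\xi)\mathbf F(\xi)=0$ for a.e.\ $\xi$; hence the system is complete iff $\ker\mathbb{G}(\xi)=\{0\}$ a.e., that is, iff $\operatorname{rank}\mathbb{G}(\xi)=r$ a.e.\ (again the failure of completeness on the set where the rank drops is witnessed by a measurably selected nonzero element of $\ker\mathbb{G}(\xi)$). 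For (d), I use that a frame is a Riesz basis precisely when its synthesis operator is injective. Because every $\chi_m$ is $M^\bot$-periodic, one computes that $\sum_{j,m}c_{j,m}\overline G_j\chi_m=0$ in $L^2(\widehat H)$ iff $\mathbb{G}^*(\xi)\mathbf C(\xi)=0$ a.e., where $\mathbf C(\xi)=[C_1(\xi),\dots,C_s(\xi)]^\top$ with $C_j=\sum_{m\in M}c_{j,m}\chi_m$, and $\{c_{j,m}\}\mapsto\mathbf C$ is a bijection of $\ell^2(M\times\{1,\dots,s\})$ onto $L^2(\Omega;\mathbb{C}^s)$; thus the synthesis operator is injective iff $\mathbb{G}^*(\xi)$ is injective a.e., i.e., $\operatorname{rank}\mathbb{G}(\xi)=s$ a.e. By (c) the frame hypothesis forces $\operatorname{rank}\mathbb{G}(\xi)=r$ a.e.; since $\operatorname{rank}\mathbb{G}(\xi)\le\min\{s,r\}$, both rank conditions can hold only if $s=r$, and conversely, if the system is a frame with $s=r$ then $\mathbb{G}(\xi)$ is a.e.\ square and invertible, so its synthesis operator is injective and the system is a Riesz basis.

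The genuinely delicate point is the necessity arguments in (a) and in the lower bound of (c): they rely on the measurability of $\xi\mapsto\lambda_{\min}[\mathbb{G}^*(\xi)\mathbb{G}(\xi)]$ and on a measurable selection of eigenvectors, respectively kernel vectors, of $\mathbb{G}(\xi)$ on the relevant sets. This is standard --- the entries of $\mathbb{G}^*\mathbb{G}$ are measurable, hence so are its eigenvalues and its spectral projections --- but it is the only place where some care is required; the rest is bookkeeping with the master formula.
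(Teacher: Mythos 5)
Your argument is correct, and for parts (a)--(c) it is essentially the paper's route: you derive the same master identity \eqref{representation} (the paper gets it from \eqref{ole} together with the generalized Parseval identity of \cite[Lemma 2.3]{ole:15}) and then reduce to pointwise eigenvalue estimates; the paper does not write out those estimates but delegates them to Lemma 1 of \cite{garcia:09}, so in effect you have supplied the details it cites, including the measurable-selection step you rightly flag. Where you genuinely diverge is (d). The paper argues via biorthogonality: if the system is a frame with $s=r$, it builds an explicit biorthogonal family $\{rF_{j'}\chi_{m'}\}$ from the columns of $\mathbb{G}^{-1}(\xi)$ and invokes \cite[Theorem 6.1.1]{ole:03}; conversely, from a biorthogonal sequence it extracts a right-inverse of $\mathbb{G}(\xi)$, pairs it with the left-inverse coming from $\alpha_{\mathbb{G}}>0$, and concludes $\mathbb{G}(\xi)$ is square. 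You instead fiberize the synthesis operator, showing its kernel is governed by $\mathbb{G}^*(\xi)$, and use the criterion that a frame is a Riesz basis iff its synthesis operator is injective, finishing with a rank count $\operatorname{rank}\mathbb{G}(\xi)\le\min\{s,r\}$. Your version is arguably cleaner and more symmetric between the two directions, at the cost of one more measurable selection (a kernel vector of $\mathbb{G}^*(\xi)$); the paper's version has the advantage of exhibiting a concrete dual/biorthogonal family of the same structured form, which is what the sampling formulas in Section 3 ultimately exploit. One small gloss worth fixing: your ``master formula for every $F\in L^2(\widehat H)$'' is obtained from Parseval in $L^2(\Omega)$, which literally applies only when $\mathbf{G}_j^\top\mathbf F\in L^2(\Omega)$ (for general $G_j\in L^2(\widehat H)$ it is a priori only in $L^1(\Omega)$); the paper states \eqref{representation} under exactly this proviso and handles the general case through \cite[Lemma 2.3]{ole:15}. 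This does not damage your proof, because in the sufficiency directions $\beta_{\mathbb{G}}<\infty$ forces $G_j\in L^\infty(\widehat H)$, and in the necessity directions your test functions $\mathbf F=\mathbf 1_E v$ (or $\mathbf 1_E v(\xi)$ with $v$ bounded) automatically put $\mathbb{G}\mathbf F$ in $L^2_s(\Omega)$, but the unconditional statement as you phrase it needs either that lemma or the uniqueness theorem for Fourier coefficients of $L^1$ functions on the compact group $\widehat H/M^\perp$ (the same point also underlies your step ``coefficients vanish $\Rightarrow \mathbb{G}(\xi)\mathbf F(\xi)=0$ a.e.'' in (a)).
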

\begin{proof}
Let $F\in L^2(\widehat{H})$; from expression \eqref{ole} we get
\begin{equation}\label{a}
\sum_{j=1}^s \sum_{m\in M} \big| \big\langle F,\overline{G}_j\,\chi_m\big\rangle_{L^2(\widehat{H})}\big|^2=
\sum_{j=1}^s \sum_{m\in M} \big| (\mathbf{1}_\Omega\mathbf{G}_j^\top \mathbf{F})^{\wedge}(m) \big|^2\,.
\end{equation}
Let $L_s^2(\Omega)$ denotes the space of the functions
$\mathbf{H}=[H_1, H_2, \ldots, H_s]^\top$  such that
\[
\|\mathbf{H}\|^2_{L_s^2(\Omega)}:= \int_\Omega|\mathbf{H}(\xi)|^2 \,dm_{\widehat{H}}(\xi)=\sum_{j=1}^s
\|H_j\|_{L^2(\Omega)}^2 <\infty\,.
\] 
By using \eqref{a} and the generalized Parseval identity
given in \cite[Lemma 2.3]{ole:15}, we obtain that if $\mathbb{G}(\xi)\,\mathbf{F}(\xi) \in L_s^2(\Omega)$ then
\begin{equation}
\label{representation}
\sum_{j=1}^s \sum_{m\in M} \big| \big\langle F,\overline{G}_j\,\chi_m\big\rangle_{L^2(\widehat{H})}\big|^2=
\sum_{j=1}^s \dfrac{1}{r} \int_\Omega |\mathbf{G}_j^\top(\xi) \mathbf{F}(\xi)|^2 dm_{\widehat{H}}(\xi)= 
\dfrac{1}{r} \, \|\mathbb{G}\,\mathbf{F}\|^2_{L_s^2(\Omega)}\,.
\end{equation}
From representation \eqref{representation}  one can deduce $(a)$, $(b)$ and $(c)$ in the same way that in the proof of Lemma 1 in \cite{garcia:09}. There,
the lemma was proved, for the case $H=\mathbb{Z}^d$ and $M=\{An: n\in \mathbb{Z}^d\}$ ($A$ a $d \times d$ matrix with integer entries and positive determinant), from  representation \eqref{representation} which is Equation (14) in \cite{garcia:09}. 

To prove $(d)$ assume first that $\{\overline{G}_j\,\chi_m\}_{m\in M;\,j=1,2,\ldots,s}$ is a frame and $s=r$. From $(c)$, and having in mind that in this case $\mathbb{G}$ is a square matrix, we deduce that $\mathbb{G}$ has a inverse matrix with entries in $L^2(\widehat{H})$. Let $\mathbf{F}_{1}(\xi),\ldots,\mathbf{F}_{s}(\xi)$ denote the columns of  $\mathbb{G}^{-1}(\xi)$
and let $F_j\in L^2(\widehat{H})$ be such that $\mathbf{F}_{j}(\xi)=\big[ F_{j}(\xi+\mu_0^\bot),  F_{j}(\xi+\mu_1^\bot),\ldots, F_{j}(\xi+\mu_{r-1}^\bot) \big]^\top$, $\xi\in \Omega$.  Applying \eqref{ole} for the case $G_j=F=1$ we obtain that $\delta_m=\int_{\widehat{H}} \overline{\chi}_m(\xi)\,dm_{\widehat{H}}(\xi) =\int_\Omega r\, \overline{\chi}_m(\xi)\,dm_{\widehat{H}}(\xi)$. Using  \eqref{ole} again, we obtain
\[
\begin{split}
&\big\langle rF_{j'}\,\chi_{m'},\overline{G}_j\,\chi_m\big\rangle_{L^2(\widehat{H})}=
\big\langle rF_{j'},\overline{G}_j\,\chi_{m-m'}\big\rangle_{L^2(\widehat{H})}=\int_{\Omega} r\mathbf{G}_j^\top(\xi)\,\mathbf{F}_{j'}(\xi)\,
\overline{\chi}_{m-m'}(\xi)\, dm_{\widehat{H}}(\xi) \\
&= \delta_{j,j'}\int_{\Omega} r\,\overline{\chi}_{m-m'}(\xi)\, dm_{\widehat{H}}(\xi) =  \delta_{j,j'} 
\delta_{m,m'},\quad m,m'\in M, \,\,\, j,j'=1,2, \ldots,s\,.
\end{split} 
\]Hence, $\{\overline{G}_j\,\chi_m\}_{m\in M;\,j=1,2, \ldots,s}$ has a biorthogonal sequence and, as a consequence, it is a Riesz basis for $L^2(\widehat{H})$ (see \cite[Theorem 6.1.1]{ole:03}).

\medskip

To prove the reciprocal, assume that $\{\overline{G}_j\,\chi_m\}_{m\in M;\,j=1,2,\ldots,s}$ is a Riesz basis. Then it is a frame and has a biorthogonal sequence denoted by $\{F_{j,m}\}_{m\in M;\,j=1,2, \ldots,s}$. For $j=1, 2, \ldots, s$, set $\mathbf{F}_{j}(\xi)=\big[F_{j,0}(\xi+\mu_0^\bot), F_{j,0}(\xi+\mu_1^\bot), \dots, F_{j,0}(\xi+\mu_{r-1}^\bot)\big]^\top$. Using  \eqref{ole}, we obtain
\[
\int_{\Omega} \mathbf{G}_j^\top(\xi)\mathbf{F}_{j'}(\xi)
\overline{(m,\xi)}\,dm_{\widehat{H}}(\xi)= \langle F_{j',0}\, ,\overline{G}_j\,\chi_m\big\rangle_{L^2(\widehat{H})}= \delta_{j,j'} \delta_{m}\,, 
\]
for $m\in M$ and $j,j'=1,2,\ldots,s$. Since $\{\chi_m\}_{m\in M}$ is an orthogonal basis for $L^2(\Omega)$, we get
\[
 \mathbf{G}_j^\top(\xi)\mathbf{F}_{j'}(\xi)= \delta_{j,j'},\quad \text{a.e.}\,\, \xi\in \Omega\,,\quad  j,j'=1,2,\ldots,s\,.
\]
Hence matrix $\mathbb{G}(\xi)$ has a right-inverse a.e $\xi\in \Omega$. Since it is a frame, from $(c)$, we have $\alpha_{\mathbb{G}}>0$, and thus it also has a left-inverse a.e $\xi\in \Omega$ (for example,  $[\mathbb{G}^*(\xi)\mathbb{G}(\xi)]^{-1}\mathbb{G}^*(\xi)$). As a consequence, $\mathbb{G}(\xi)$ is a square matrix and $s=r$.
\end{proof}

\medskip

It is worth to mention that, in Ref.~\cite{ole:15}, Christensen and Goh have analyzed sequences of the type
$\big\{ \overline{G}_j\, \chi_m \big\}_{m\in M_j;\, j\in J}$ in $L^2(\widehat{H})$ where $H$ is a LCA group, $J$ is a countable set of index, and
$M_j$ is a uniform lattice that depends on $j$. Thus, Proposition~\ref{caracterizacion} corresponds to the simpler case in which $H$ is discrete, $J$  is finite and $M_j=M$, but our result provides more information on the sequence $\big\{ \overline{G}_j\, \chi_m \big\}_{m\in M_j;\, j\in J}$ than those included in \cite{ole:15}; this extra information will be needed to deduce our sampling results in next section.

\medskip

Notice that as a consequence of the equivalence between the spectral and the Frobenius norms (see, for instance, \cite{horn:99}), it follows that  
$\beta_{\mathbb{G}}<\infty$ if and only if the functions $G_{j}\in L^\infty(\widehat{H})$, $j=1, 2, \ldots,s$. Under this circumstance we have:

\begin{prop}
\label{caracterizacionframe} 
Assume that the functions $G_{j}$, $j=1, 2, \ldots,s$, belong to $L^\infty(\widehat{H})$. Then,  the sequence $\big\{\overline{G}_j\,\chi_m\big\}_{m\in M;\,j=1,2,\ldots,s}$  is a frame for $L^2(\widehat{H})$ if and only if 
\[
\einf_{\xi \in \Omega} \det \big[\mathbb{G}^*(\xi)\mathbb{G}(\xi)\big] > 0.
\]
Moreover, provided that  the functions $G_{j}$, $j=1, 2,\ldots,s$, are continuous on $\widehat{H}$, the sequence $\{\overline{G}_j\,\chi_m\}_{m\in M;\,j=1,2,\ldots,s}$  is a frame for $L^2(\widehat{H})$ if and only if
\[
\operatorname{rank} \mathbb{G}(\xi)=r\,\,\, \text{for all \,} \xi\in \widehat{H}.
\]
\end{prop}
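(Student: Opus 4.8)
The plan is to reduce both assertions to Proposition~\ref{caracterizacion}(c) and then translate the eigenvalue condition $0<\alpha_{\mathbb{G}}\le\beta_{\mathbb{G}}<\infty$ into a determinant condition by elementary linear algebra. Since the $G_j$ are assumed to lie in $L^\infty(\widehat{H})$, the remark preceding the statement already gives $\beta_{\mathbb{G}}<\infty$, so by Proposition~\ref{caracterizacion}(c) the sequence $\{\overline{G}_j\chi_m\}_{m\in M;\,j=1,\ldots,s}$ is a frame for $L^2(\widehat{H})$ if and only if $\alpha_{\mathbb{G}}=\einf_{\xi\in\Omega}\lambda_{\min}[\mathbb{G}^*(\xi)\mathbb{G}(\xi)]>0$. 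The bridge to the determinant is the two-sided bound
\[
\lambda_{\min}^{r}\ \le\ \det A\ =\ \prod_{i=1}^{r}\lambda_i\ \le\ \lambda_{\min}\,\lambda_{\max}^{r-1},
\]
valid for every positive semidefinite $r\times r$ matrix $A$ with eigenvalues $\lambda_1\le\cdots\le\lambda_r$ (so $\lambda_{\min}=\lambda_1$, $\lambda_{\max}=\lambda_r$). Taking $A=\mathbb{G}^*(\xi)\mathbb{G}(\xi)$ and using $\lambda_{\max}[\mathbb{G}^*(\xi)\mathbb{G}(\xi)]\le\beta_{\mathbb{G}}<\infty$ a.e.\ on $\Omega$, the left inequality yields $\einf_{\Omega}\det[\mathbb{G}^*\mathbb{G}]\ge\alpha_{\mathbb{G}}^{r}$ and the right one yields $\alpha_{\mathbb{G}}\ge\beta_{\mathbb{G}}^{-(r-1)}\,\einf_{\Omega}\det[\mathbb{G}^*\mathbb{G}]$ (the trivial case $\beta_{\mathbb{G}}=0$ aside). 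Hence $\alpha_{\mathbb{G}}>0$ precisely when $\einf_{\xi\in\Omega}\det[\mathbb{G}^*(\xi)\mathbb{G}(\xi)]>0$, which is the first statement.

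For the second statement, note first that a continuous function on the compact group $\widehat{H}$ is bounded, so continuity of the $G_j$ gives $G_j\in L^\infty(\widehat{H})$ and the first part applies. The relevant observation is that $\xi\mapsto\det[\mathbb{G}^*(\xi)\mathbb{G}(\xi)]$ is a continuous function on all of $\widehat{H}$ (it is a polynomial expression in the values $G_j(\xi+\mu_k^\bot)$ and their conjugates) and is $M^\bot$-periodic, since translating $\xi$ by an element of $M^\bot$ merely permutes the columns of $\mathbb{G}(\xi)$, leaving the determinant of the Gram matrix $\mathbb{G}^*\mathbb{G}$ unchanged. In particular, $\det[\mathbb{G}^*(\xi)\mathbb{G}(\xi)]>0$ for all $\xi\in\widehat{H}$ is equivalent to $\operatorname{rank}\mathbb{G}(\xi)=r$ for all $\xi\in\widehat{H}$, and by periodicity this may be tested on the section $\Omega$.

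Now the two implications. If $\operatorname{rank}\mathbb{G}(\xi)=r$ for every $\xi\in\widehat{H}$, then $\det[\mathbb{G}^*\mathbb{G}]$ is a strictly positive continuous function on the compact set $\widehat{H}$, hence bounded below by a positive constant; thus $\einf_{\Omega}\det[\mathbb{G}^*\mathbb{G}]>0$ and the first part makes the sequence a frame. Conversely, if it is a frame, the first part gives $\einf_{\xi\in\Omega}\det[\mathbb{G}^*(\xi)\mathbb{G}(\xi)]=c>0$, so $\det[\mathbb{G}^*\mathbb{G}]\ge c$ holds $m_{\widehat{H}}$-a.e.\ on $\Omega$ and, by $M^\bot$-periodicity, $m_{\widehat{H}}$-a.e.\ on $\widehat{H}$. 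Since this function is continuous, the set $\{\xi\in\widehat{H}:\det[\mathbb{G}^*(\xi)\mathbb{G}(\xi)]\ge c\}$ is closed and of full Haar measure, so its complement is an open set of measure zero, hence empty; therefore $\det[\mathbb{G}^*(\xi)\mathbb{G}(\xi)]\ge c>0$ for every $\xi\in\widehat{H}$, i.e.\ $\operatorname{rank}\mathbb{G}(\xi)=r$ everywhere.

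The one genuinely delicate point is this final ``a.e.\ $\Rightarrow$ everywhere'' step: it is exactly where continuity of the $G_j$ enters, and it rests on two facts about the LCA setting, namely that a nonempty open subset of $\widehat{H}$ has positive Haar measure and that $\det[\mathbb{G}^*\mathbb{G}]$ is $M^\bot$-periodic, the latter being what allows one to pass from the essential infimum over the measurable section $\Omega$ to a pointwise statement on the whole of $\widehat{H}$. Everything else is bookkeeping with eigenvalue--determinant inequalities and the characterization already established in Proposition~\ref{caracterizacion}.
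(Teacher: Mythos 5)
Your proof is correct and follows essentially the same route as the paper: the first equivalence comes from exactly the eigenvalue--determinant bounds $\alpha_{\mathbb{G}}^r \le \det[\mathbb{G}^*(\xi)\mathbb{G}(\xi)] \le \beta_{\mathbb{G}}^{r-1}\lambda_{\min}[\mathbb{G}^*(\xi)\mathbb{G}(\xi)]$ combined with Proposition~\ref{caracterizacion}(c). For the continuous case the paper simply asserts the equivalence in one line, whereas you supply the details it leaves implicit ($M^\bot$-periodicity of $\det[\mathbb{G}^*\mathbb{G}]$, compactness of $\widehat{H}$, and the passage from a.e.\ to everywhere via the full support of Haar measure), all of which are correct.
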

\begin{proof} Since $\det \big[\mathbb{G}^*(\xi)\mathbb{G}(\xi)\big]$  is the product of its eigenvalues, we get 
\[
\alpha_{\mathbb{G}}^r \le \det [\mathbb{G}^*(\xi)\mathbb{G}(\xi)] \le \beta_{\mathbb{G}}^{r-1} \lambda_{\min}[\mathbb{G}^*(\xi)\mathbb{G}(\xi)]\,.
\]
Hence, $\alpha_{\mathbb{G}}>0$ if and only if $\einf_{\xi \in \Omega} \det [\mathbb{G}^*(\xi)\mathbb{G}(\xi)] > 0$. Whenever the functions $G_{j}$ are continuous on $\widehat{H}$, this condition is equivalent to $\det \big[\mathbb{G}^*(\xi)\mathbb{G}(\xi)\big]\neq 0$ for all $\xi\in \widehat{H}$.
\end{proof}

Concerning the appropriate dual frames of the sequence $\{\overline{G}_j\,\chi_m\}_{m\in M;\,j=1,2,\ldots,s}$, having in mind Proposition~\ref{caracterizacion}(b) and Proposition~3.13 in \cite{ole:15} we have the following result:
\begin{prop}
\label{proposicionaes} 
Assume that the functions $G_{j}, H_j\in L^\infty(\widehat{H})$, $j=1, 2, \ldots,s$, satisfy
\begin{equation}
\label{aes}
\mathbf{H}(\xi)^\top\,\mathbb{G}(\xi)=[1, 0, \dots, 0]\,,\quad \text{a.e.}\,\,\, \xi\in \widehat{H}\,,
\end{equation}
where $\mathbf{H}(\xi)=[H_1(\xi), H_2(\xi), \ldots, H_s(\xi)]^\top$. Then, the sequences  $\{\overline{G}_j\,\chi_m\}_{m\in M;\,j=1,2,\ldots,s}$  and $\{rH_j\,\chi_m\}_{m\in M;\,j=1,2,\ldots,s}$ form a pair of dual frames for $L^2(\widehat{H})$.
\end{prop}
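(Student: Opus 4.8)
The plan is to verify directly that the two sequences are Bessel and satisfy the duality identity
\[
F=\sum_{j=1}^s\sum_{m\in M}\big\langle F,\overline{G}_j\,\chi_m\big\rangle_{L^2(\widehat H)}\,\big(rH_j\,\chi_m\big)\quad\text{for all }F\in L^2(\widehat H)\,,
\]
which by a standard result (e.g.\ \cite[Lemma 6.3.2]{ole:03}) forces both families to be frames forming a dual pair. The Bessel property is immediate: since $G_j,H_j\in L^\infty(\widehat H)$, the constants $\beta_{\mathbb G}$ and $\beta_{\mathbb H}$ are finite, so Proposition~\ref{caracterizacion}(b) applies to each of the two systems (for the $H_j$ system one simply replaces $G_j$ by $\overline{H}_j$ in \eqref{Gmatrix}, noting that $rH_j\chi_m=\overline{(\overline{rH_j})}\,\chi_m$).

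The substantive part is the duality identity, and the idea is to transport everything to $L^2_s(\Omega)$ via the matrix picture. Fix $F\in L^2(\widehat H)$ and let $G=\sum_{j,m}\langle F,\overline{G}_j\chi_m\rangle\,rH_j\chi_m$ (the series converges by Bessel). For any test function $\Phi\in L^2(\widehat H)$, expand $\langle G,\Phi\rangle$ and use \eqref{ole} twice: $\langle F,\overline{G}_j\chi_m\rangle=(\mathbf 1_\Omega\mathbf G_j^\top\mathbf F)^\wedge(m)$ and, analogously, $\langle rH_j\chi_m,\Phi\rangle=\overline{(\mathbf 1_\Omega\,r\overline{\mathbf H_j}^\top\,\overline{\boldsymbol\Phi})^\wedge(m)}$ — or more cleanly, work with the samples of $\Phi$ against the $H_j$ system. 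Summing over $m\in M$ and invoking the generalized Parseval identity of \cite[Lemma 2.3]{ole:15} (the same tool used for \eqref{representation}) collapses the sum over $M$ into an integral over $\Omega$, producing
\[
\langle G,\Phi\rangle=\int_\Omega \Big(\sum_{j=1}^s \big(\mathbf G_j^\top(\xi)\mathbf F(\xi)\big)\,\overline{H_j(\xi+\mu_k^\bot)}\Big)_{k}\cdot\overline{\boldsymbol\Phi(\xi)}\;dm_{\widehat H}(\xi)\,,
\]
i.e.\ the $k$-th component of the $L^2_s(\Omega)$-vector attached to $G$ is $\sum_j \big(\mathbf G_j^\top(\xi)\mathbf F(\xi)\big)\overline{H_j(\xi+\mu_k^\bot)}$. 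In matrix form this vector equals $\mathbb G(\xi+\mu_\bullet^\bot)^{\!*}$-type contractions; the point is that hypothesis \eqref{aes}, which says $\mathbf H(\xi)^\top\mathbb G(\xi)=[1,0,\dots,0]$ a.e., after translating $\xi\mapsto \xi+\mu_k^\bot$ (legitimate since \eqref{aes} holds a.e.\ on all of $\widehat H$, and $\mathbf F,\mathbf G_j$ are built from the cosets $\xi+\mu_n^\bot$) yields exactly $\sum_{n}H_j(\xi+\mu_n^\bot)G_j(\xi+\mu_n^\bot)\cdot(\text{selector})=F(\xi+\mu_k^\bot)$ for each $k$. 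Hence the $L^2_s(\Omega)$-vector of $G$ coincides with $\mathbf F$ componentwise, which by the reconstruction isometry between $L^2(\widehat H)$ and the coset representation means $G=F$.

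I expect the main obstacle to be purely bookkeeping: keeping the conjugations and the roles of $\mathbb G$ versus the "$\mathbb H$-matrix" straight when applying \eqref{ole} to the $H_j$ system, and correctly using that \eqref{aes} is an identity on $\widehat H$ (not just on $\Omega$) so that it may be evaluated at each translate $\xi+\mu_k^\bot$. No genuinely hard estimate is involved; once the matrix identity $\mathbf H(\xi)^\top\mathbb G(\xi)=[1,0,\dots,0]$ is fed into the integrand, the collapse to $F$ is forced. Alternatively, and perhaps more transparently, one can avoid the test function $\Phi$ altogether: use \eqref{ole} to write the analysis coefficients, note that the synthesis map for $\{rH_j\chi_m\}$ sends a coefficient sequence to the function whose coset-vector is obtained by the adjoint matrix operation, and then \eqref{aes} says the composition analysis-then-synthesis acts on coset-vectors as multiplication by the identity matrix — giving $F$ back. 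Either route closes the proof.
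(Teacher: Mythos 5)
Your proof is correct, but it follows a genuinely different route from the paper's: the paper offers no internal argument for Proposition~\ref{proposicionaes} at all — it obtains the Bessel property from Proposition~\ref{caracterizacion}(b) (since $G_j,H_j\in L^\infty(\widehat H)$) and then simply invokes Proposition~3.13 of Christensen and Goh \cite{ole:15}, which is an off-the-shelf duality criterion for exactly this kind of Fourier-like system. You instead verify duality directly: both systems are Bessel, the mixed frame operator is evaluated weakly on the fibers over $\Omega$ via \eqref{ole} and the Parseval relation for the orthogonal basis $\{\chi_m\}$ of $L^2(\Omega)$, and the translated form of \eqref{aes}, namely $\sum_{j=1}^s H_j(\xi+\mu_k^\bot)G_j(\xi+\mu_n^\bot)=\delta_{k,n}$ a.e. — the same manipulation the paper carries out inside the proof of Proposition~\ref{proposicionany} — collapses the fiber vector back to $\mathbf F(\xi)$; a standard lemma (Lemma~5.6.2 in the edition of \cite{ole:03} cited in the paper, 6.3.2 in the later edition you quote) then promotes two Bessel sequences satisfying the reconstruction identity to a dual frame pair. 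Your route buys self-containedness within the paper's own machinery and makes explicit where it matters that \eqref{aes} holds a.e.\ on all of $\widehat H$ rather than merely on $\Omega$; the paper's route buys brevity and rests on a more general external result (countably many generators, lattices $M_j$ depending on $j$). Two bookkeeping slips in your sketch, which you yourself anticipated and which do not affect the structure: in your displayed expression for $\langle G,\Phi\rangle$ the factor $\overline{H_j(\xi+\mu_k^\bot)}$ should be $H_j(\xi+\mu_k^\bot)$, since the synthesis system carries $H_j$ unconjugated, and correspondingly the fiber action of the synthesis map is multiplication by the transpose of the matrix built from the $H_j$ as in \eqref{Gmatrix}, not by its conjugate transpose; your final identity already uses the correct unconjugated form.
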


 Concerning the existence of a vector $\mathbf{H}(\xi)=[H_1(\xi), H_2(\xi), \ldots, H_s(\xi)]^\top$ with entries in $L^\infty(\widehat{H})$ and satisfying \eqref{aes} we prove the following result:
\begin{prop}
\label{proposicionany} 
Assume that the functions $G_{j}\in L^\infty(\widehat{H})$, $j=1,\ldots,s$, and also that $\einf_{\xi \in \Omega} \det \,[\mathbb{G}^*(\xi)\mathbb{G}(\xi)] > 0$.
Then all the possible vectors $\mathbf{H}(\xi)$ satisfying \eqref{aes} with entries in $L^\infty(\widehat{H})$ are given by the first row of the $r\times s$ matrices
\begin{equation}
\label{many}
\mathbb{G}^\dag(\xi)+\mathbb{U}(\xi)\big[\mathbb{I}_s-\mathbb{G}(\xi)\mathbb{G}^\dag(\xi)\big]\,
\end{equation}
where $\mathbb{G}^\dag(\xi)=\big[\mathbb{G}^*(\xi)\,\mathbb{G}(\xi)\big]^{-1}\,\mathbb{G}^*(\xi)$ denotes the Moore-Penrose pseudo-inverse of 
$\mathbb{G}(w)$, $\mathbb{I}_s$ is the identity matrix, and $\mathbb{U}(\xi)$ denotes any $r\times s$ matrix with entries in $L^\infty(\widehat{H})$.
\end{prop}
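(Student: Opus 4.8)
The plan is to reduce \eqref{aes} to a pointwise-in-$\xi$ question in linear algebra — the description of all $L^\infty$ left-multipliers that send $\mathbb{G}(\xi)$ to a matrix whose first row is $[1,0,\dots,0]$ — and then to check that the stated parametrization preserves membership in $L^\infty(\widehat{H})$.

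First I would record the consequences of the hypotheses. Since $G_j\in L^\infty(\widehat{H})$, the entries of the $r\times r$ matrix $\mathbb{G}^*(\xi)\mathbb{G}(\xi)$ lie in $L^\infty(\widehat{H})$; writing its inverse as the adjugate divided by $\det[\mathbb{G}^*(\xi)\mathbb{G}(\xi)]$, and using that this determinant is bounded below a.e. (the assumption $\einf_{\xi\in\Omega}\det[\mathbb{G}^*\mathbb{G}]>0$, which by the quasi-$M^\perp$-periodicity of $\mathbb{G}$ — the columns are merely permuted under $\xi\mapsto\xi+\mu_n^\perp$, so the determinant is $M^\perp$-periodic — controls the determinant a.e. on all of $\widehat{H}$), one obtains that $[\mathbb{G}^*(\xi)\mathbb{G}(\xi)]^{-1}$, hence $\mathbb{G}^\dag=[\mathbb{G}^*\mathbb{G}]^{-1}\mathbb{G}^*$ and $\mathbb{G}\mathbb{G}^\dag$, all have entries in $L^\infty(\widehat{H})$. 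Moreover $\mathbb{G}^\dag(\xi)\mathbb{G}(\xi)=\mathbb{I}_r$ a.e.; this is the only Moore--Penrose relation the argument uses.

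Next, let $\mathbf{g}_1(\xi)^\top$ denote the first row of $\mathbb{G}^\dag(\xi)$, so that $\mathbf{g}_1\in (L^\infty(\widehat{H}))^s$ and, reading off the first row of $\mathbb{G}^\dag\mathbb{G}=\mathbb{I}_r$, one has $\mathbf{g}_1(\xi)^\top\mathbb{G}(\xi)=[1,0,\dots,0]$ a.e. For the first inclusion, given any $\mathbb{U}\in (L^\infty(\widehat{H}))^{r\times s}$ with first row $\mathbf{u}^\top$, the first row of $\mathbb{G}^\dag+\mathbb{U}[\mathbb{I}_s-\mathbb{G}\mathbb{G}^\dag]$ is $\mathbf{H}^\top=\mathbf{g}_1^\top+\mathbf{u}^\top[\mathbb{I}_s-\mathbb{G}\mathbb{G}^\dag]$; it lies in $(L^\infty(\widehat{H}))^s$ by the bound above, and right-multiplying by $\mathbb{G}$ and using $[\mathbb{I}_s-\mathbb{G}\mathbb{G}^\dag]\mathbb{G}=\mathbb{G}-\mathbb{G}(\mathbb{G}^\dag\mathbb{G})=0$ leaves $\mathbf{g}_1^\top\mathbb{G}=[1,0,\dots,0]$, so \eqref{aes} holds. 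For the reverse inclusion, let $\mathbf{H}\in (L^\infty(\widehat{H}))^s$ satisfy $\mathbf{H}^\top\mathbb{G}=[1,0,\dots,0]$ a.e. Put $\mathbf{u}^\top:=\mathbf{H}^\top-\mathbf{g}_1^\top\in (L^\infty(\widehat{H}))^s$, which satisfies $\mathbf{u}^\top\mathbb{G}=0$ a.e.; then $\mathbf{u}^\top[\mathbb{I}_s-\mathbb{G}\mathbb{G}^\dag]=\mathbf{u}^\top-(\mathbf{u}^\top\mathbb{G})\mathbb{G}^\dag=\mathbf{u}^\top$, so taking $\mathbb{U}$ to be any $r\times s$ $L^\infty$ matrix with first row $\mathbf{u}^\top$ (all other rows zero, say), the first row of $\mathbb{G}^\dag+\mathbb{U}[\mathbb{I}_s-\mathbb{G}\mathbb{G}^\dag]$ equals $\mathbf{g}_1^\top+\mathbf{u}^\top=\mathbf{H}^\top$, as wanted.

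I do not anticipate a genuine obstacle: the only delicate point is the $L^\infty$ bound on $[\mathbb{G}^*\mathbb{G}]^{-1}$ via the adjugate/determinant formula — which is exactly where $\einf_{\xi\in\Omega}\det[\mathbb{G}^*\mathbb{G}]>0$ is used — together with keeping the transposes straight, noting that throughout only the identity $\mathbb{G}^\dag\mathbb{G}=\mathbb{I}_r$ is invoked, and that the parametrization in \eqref{many} depends on $\mathbb{U}$ only through its first row.
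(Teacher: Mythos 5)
Your proof is correct, and on the converse direction it takes a genuinely different route from the paper. The paper lifts a solution $\mathbf{H}$ of \eqref{aes} to the full $r\times s$ matrix $\mathbb{H}^\top(\xi)$ built from the shifts $\mathbf{H}(\xi+\mu_i^\bot)$ (defining $\mathbb{H}$ as in \eqref{Gmatrix}); using the group structure of $M^\bot$ (for each $\mu_i^\bot,\mu_j^\bot$ there is $l$ with $\mu_l^\bot=\mu_j^\bot-\mu_i^\bot$) it shows that \eqref{aes}, read at shifted arguments, gives $\mathbb{H}^\top(\xi)\mathbb{G}(\xi)=\mathbb{I}_r$ a.e., and then recovers \eqref{many} by taking $\mathbb{U}=\mathbb{H}^\top$, i.e. it parametrizes via full left inverses of $\mathbb{G}$. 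You instead never leave the level of the first row: you subtract the first row $\mathbf{g}_1^\top$ of $\mathbb{G}^\dag$, observe $\mathbf{u}^\top\mathbb{G}=0$ for $\mathbf{u}=\mathbf{H}-\mathbf{g}_1$, hence $\mathbf{u}^\top[\mathbb{I}_s-\mathbb{G}\mathbb{G}^\dag]=\mathbf{u}^\top$, and take $\mathbb{U}$ with first row $\mathbf{u}^\top$ and zero rows elsewhere, using only the identity $\mathbb{G}^\dag\mathbb{G}=\mathbb{I}_r$. Both are valid; your version is slightly more elementary (no shifting argument, no appeal to the finite-group structure of $M^\bot$ beyond periodicity), while the paper's choice exhibits the natural $\mathbb{U}$ coming from $\mathbf{H}$ itself and ties the result to left inverses of $\mathbb{G}$, consistent with the viewpoint of Proposition~\ref{caracterizacion}. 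Your explicit justification that $\mathbb{G}^\dag$ has $L^\infty(\widehat{H})$ entries (adjugate over determinant, plus the $M^\bot$-periodicity of $\det[\mathbb{G}^*\mathbb{G}]$ so that the essential infimum over $\Omega$ controls it a.e.\ on $\widehat{H}$) fills in a step the paper treats as immediate.
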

\begin{proof} It is easy to check that the first row $\mathbf{H}(\xi)^\top$ of $\mathbb{G}^\dag(\xi)+\mathbb{U}(\xi)\big[\mathbb{I}_s-\mathbb{G}(\xi)\mathbb{G}^\dag(\xi)\big]$ satisfies \eqref{aes} having entries in $L^\infty(\widehat{H})$. Reciprocally, for any $\mu^\bot_ i,\mu^\bot_ j$ in the subgroup $M^\bot$, there exist $l$ such that $\mu^\bot_l=\mu^\bot_j-\mu^\bot_i$. From  \eqref{aes} we have
$\sum_{j=1}^s H_j(\xi)\, G_j(\xi+\mu^\perp_l)= \delta_l$, a.e. $\xi\in \widehat{H}$. Thus $\sum_{j=1}^s H_j(\xi+\mu^\perp_ i)\, G_j(\xi+\mu^\perp_ j)= \delta_{i,j}$, a.e. $\xi\in \widehat{H}$. Then, the matrix $\mathbb{H}^\top(\xi)$, where $\mathbb{H}(\xi)$ is defined as in \eqref{Gmatrix}, is a left-inverse of $\mathbb{G}(\xi)$, and it can be written as in \eqref{many} taking  $\mathbb{U}(\xi)=\mathbb{H}^\top(\xi)$ which has entries  in $L^\infty(\widehat{H})$.
\end{proof}
\section{Generalized sampling formulas in $\mathcal{A}_a$}
\label{section3}
Having in mind expression \eqref{samples} for the samples, as a consequence of Proposition~\ref{caracterizacion}(a) the sequence of samples $\{\mathcal{L}_jx(m)\}_{m\in M; \,j=1,\ldots,s}$ determines uniquely each $x \in \mathcal{A}_{a}$  if and only if  $\text{rank}\,\mathbb{G}(\xi)=r$, a.e. $\xi \in \Omega$. 

Furthermore, from Proposition~\ref{caracterizacionframe} stable recovery is possible provided that the functions $G_{j}\in L^\infty(\widehat{H})$, $j=1,2,\ldots,s$, and $\einf_{\xi \in \Omega} \det \,[\mathbb{G}^*(\xi)\mathbb{G}(\xi)] > 0$. Next we prove that each dual frame of $\{\overline{G}_j\,\chi_m\}_{m\in M;\,j=1,2,\ldots,s}$ provided by Proposition~\ref{proposicionaes} gives a sampling formula performing this recovery. 

Let assume that  $G_{j}, H_j\in L^\infty(\widehat{H})$, $j=1, 2, \ldots,s$, such that \eqref{aes} holds.
By using Proposition~\ref{proposicionaes}, the sequences $\{\overline{G}_j\,\chi_m\}_{m\in M;\,j=1,2,\ldots,s}$  and $\{r H_j\,\chi_m\}_{m\in M;\,j=1,2,\ldots,s}$ are dual frames for $L^2(\widehat{H})$. Hence, for any $F\in L^2(\widehat{H})$
\[
 F(\gamma)=r\sum_{m\in M}\sum_{j=1}^s \big\langle F,\overline{G}_j\,\chi_m\big\rangle_{L^2(\widehat{H})} \, H_j(\gamma)\,\chi_m(\gamma) \quad \text{in $L^2(\widehat{H})$}.
\]
Since  $\mathcal{L}_jx(m)=\big\langle F, \overline{G}_j\,\chi_m\big\rangle_{L^2(\widehat{H})}$, where $x=\mathcal{T}_{H,a}F$, we get
\[
  F(\gamma)=r\sum_{m\in M}\sum_{j=1}^s \mathcal{L}_jx(m) \, H_j(\gamma)\,\chi_m(\gamma) \quad \text{in $L^2(\widehat{H})$}.
\]
 Finally, the isomorphism $\mathcal{T}_{H,a}$ and the shifting property \eqref{sp} give, for any $x\in \mathcal{A}_a$, the sampling formula
\begin{equation}
\label{samplingFormula}
\begin{split}
 x&=\sum_{m\in M}\sum_{j=1}^s\mathcal{L}_jx(m)\mathcal{T}_{H,a}\big[r H_j(\cdot)\chi_m(\cdot)\big]
   =\sum_{m\in M}\sum_{j=1}^s\mathcal{L}_jx(m)U(m)\mathcal{T}_{H,a}[r H_j(\cdot)]\\
   &=\sum_{m\in M}\sum_{j=1}^s\mathcal{L}_jx(m)U(m)c_{j,h}\,,
\end{split}
\end{equation}
where $c_{j, h}:=\mathcal{T}_{H,a}(rH_j)\in \mathcal{A}_a$, $j=1,2,\dots, s$. Besides, 
$\big\{U(m) c_{j,h}  \big\}_{m\in M;\,j=1,2,\ldots,s}$ is a frame for $\mathcal{A}_a$.  

Note that, in the oversampling setting, i.e., whenever $s>r$, according to \eqref{many}  there exist infinitely many sampling functions $U(m)c_{j,h}$, $j=1, 2, \ldots,s$, for which sampling formula \eqref{samplingFormula} holds. This remarkable fact could be used to obtain sampling functions with prescribed properties.

Collecting all the pieces that we have obtained until now we prove the following result:
\begin{teo}
\label{regular}
Let $b_j\in \mathcal{H}$ and let $\mathcal{L}_j$ be its associated $U$-system for $j=1,2,\dots,s$. Assume that the functions $G_j$, $j=1,2,\dots,s$, given in \eqref{g_j} belong to $L^\infty(\widehat{H})$; or equivalently, that $\beta_{\mathbb{G}}<\infty$ for the associated $s\times r$ matrix $\mathbb{G}(\xi)$ given in \eqref{Gmatrix}. The following statements are equivalent:
\begin{enumerate}[(a)]
\item $\displaystyle{\einf_{\xi \in \Omega} \det \,[\mathbb{G}^*(\xi)\mathbb{G}(\xi)] > 0}$.
\item $\alpha_{\mathbb{G}}>0$.
\item There exists a vector $\big[H_1(\xi), H_2(\xi), \dots, H_s(\xi)\big]$ with entries in $L^\infty(\widehat{H})$ satisfying
\begin{equation}
\label{Hcondition}
\big[H_1(\xi), H_2(\xi), \dots, H_s(\xi)\big]\, \mathbb{G}(\xi)=[1, 0, \dots, 0] \quad \text{a.e.  $\xi$ in $\widehat{H}$}\,.
\end{equation}
\item There exist $c_j\in \mathcal{A}_a$, $j=1,2, \dots, s$, such that the sequence
$\big\{U(m)c_j\big\}_{m\in \mathbb{Z};\,j=1,2,\dots s}$ is a frame for $\mathcal{A}_a$, and for any $x\in \mathcal{A}_a$ the expansion
\begin{equation}
\label{framesampling}
x=\sum_{j=1}^s \sum_{m \in M}\mathcal{L}_j x(m)\,U(m)c_j \quad \text{in $\mathcal{H}$}\,,
\end{equation}
holds. 
\item There exists a frame $\big\{C_{j,m}\big\}_{m\in M;\,j=1,2,\dots s}$ for $\mathcal{A}_a$ such that, for each $x\in \mathcal{A}_a$ the expansion
\[
x=\sum_{j=1}^s \sum_{m\in M}\mathcal{L}_{j} x(m)\,C_{j,m} \quad \text{in $\mathcal{H}$}\,,
\]
holds.
\end{enumerate}
In case the equivalent conditions are satisfied, for the elements $c_j$ in $(d)$ we have $c_{j}=\mathcal{T}_{H,a}(rH_j)$, for some functions $H_j$ in $L^\infty(\widehat{H})$, $j=1,2,\dots, s$, and satisfying \eqref{Hcondition}.
\end{teo}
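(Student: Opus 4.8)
The plan is to prove the chain of equivalences by a round-robin argument, relying almost entirely on the propositions already established in Section~\ref{section2}. The equivalence $(a)\Leftrightarrow(b)$ is immediate from the eigenvalue inequalities $\alpha_{\mathbb{G}}^r \le \det[\mathbb{G}^*(\xi)\mathbb{G}(\xi)] \le \beta_{\mathbb{G}}^{r-1}\lambda_{\min}[\mathbb{G}^*(\xi)\mathbb{G}(\xi)]$ proved inside Proposition~\ref{caracterizacionframe}, together with the hypothesis $\beta_{\mathbb{G}}<\infty$: indeed $\alpha_{\mathbb{G}}>0$ exactly when $\einf_{\xi\in\Omega}\det[\mathbb{G}^*(\xi)\mathbb{G}(\xi)]>0$. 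The implication $(a)\Rightarrow(c)$ is Proposition~\ref{proposicionany}: under the standing assumption $G_j\in L^\infty(\widehat H)$ and the essential-infimum positivity, the first row of $\mathbb{G}^\dag(\xi)$ (take $\mathbb{U}\equiv 0$ in \eqref{many}) furnishes a vector $[H_1,\dots,H_s]$ with entries in $L^\infty(\widehat H)$ satisfying \eqref{Hcondition}.

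Next I would prove $(c)\Rightarrow(d)$. Given such $H_j$, Proposition~\ref{proposicionaes} shows that $\{\overline{G}_j\chi_m\}$ and $\{rH_j\chi_m\}$ are a dual frame pair in $L^2(\widehat H)$, so every $F\in L^2(\widehat H)$ expands as $F=r\sum_{m,j}\langle F,\overline{G}_j\chi_m\rangle H_j\chi_m$. Writing $x=\mathcal{T}_{H,a}F$ and invoking expression \eqref{samples} to identify $\langle F,\overline{G}_j\chi_m\rangle = \mathcal{L}_j x(m)$, then applying the isomorphism $\mathcal{T}_{H,a}$ and its shifting property \eqref{sp}, one obtains precisely the sampling formula \eqref{samplingFormula} with $c_j = c_{j,h} = \mathcal{T}_{H,a}(rH_j)\in\mathcal{A}_a$. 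That $\{U(m)c_j\}$ is a frame for $\mathcal{A}_a$ follows because it is the image under the isomorphism $\mathcal{T}_{H,a}$ of the dual frame $\{rH_j\chi_m\}$, and isomorphisms carry frames to frames. The implication $(d)\Rightarrow(e)$ is trivial: set $C_{j,m}:=U(m)c_j$.

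The step that needs genuine (though still short) work is $(e)\Rightarrow(a)$, closing the loop. Suppose $\{C_{j,m}\}$ is a frame for $\mathcal{A}_a$ reconstructing every $x$ from its samples $\mathcal{L}_j x(m)$. Pulling back through $\mathcal{T}_{H,a}^{-1}$ and using \eqref{samples}, this says that $\{\overline{G}_j\chi_m\}$ admits a dual-type reconstructing system in $L^2(\widehat H)$, hence the analysis operator $F\mapsto\{\langle F,\overline{G}_j\chi_m\rangle\}$ must be bounded below; equivalently $\{\overline{G}_j\chi_m\}$ is a frame for $L^2(\widehat H)$ (its analysis operator being bounded below and the synthesis of the dual being bounded forces the lower frame inequality). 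By Proposition~\ref{caracterizacion}(c) this gives $\alpha_{\mathbb{G}}>0$, which is $(a)$. The one subtlety to handle carefully is that $(e)$ only asserts existence of \emph{some} reconstructing frame, not that $\{\overline{G}_j\chi_m\}$ itself is a frame; but boundedness below of the coefficient map $F\mapsto\{\mathcal{L}_j x(m)\}$ is forced by the reconstruction identity and the upper frame bound of $\{C_{j,m}\}$, via $\|F\|^2=\|x\|^2\le B\sum_{j,m}|\mathcal{L}_j x(m)|^2$, and the upper bound of $\{\overline{G}_j\chi_m\}$ is free since $\beta_{\mathbb{G}}<\infty$ by hypothesis. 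This is the only place where the argument is not a pure citation, and it is where I would be most careful. Finally, the concluding sentence of the theorem — that the $c_j$ in $(d)$ are necessarily of the form $\mathcal{T}_{H,a}(rH_j)$ with $H_j\in L^\infty(\widehat H)$ satisfying \eqref{Hcondition} — follows by running $(d)\Rightarrow(c)$ explicitly: set $H_j:=r^{-1}\mathcal{T}_{H,a}^{-1}c_j$, and use that $\{U(m)c_j\}$ being a frame forces $\{rH_j\chi_m\}$ to be a frame, hence the $H_j$ lie in $L^\infty(\widehat H)$ by Proposition~\ref{caracterizacion}(b), while the reconstruction identity \eqref{framesampling} translated back to $L^2(\widehat H)$ forces the biorthogonality-type relation that is equivalent to \eqref{Hcondition}.
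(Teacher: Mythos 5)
Your proposal is correct and follows essentially the same route as the paper: the same chain $(a)\Leftrightarrow(b)$, $(a)\Rightarrow(c)\Rightarrow(d)\Rightarrow(e)\Rightarrow(a)$ through Propositions~\ref{caracterizacionframe}, \ref{proposicionany} and \ref{proposicionaes}, with your closing step $(e)\Rightarrow(a)$ simply unpacking the frame lemma the paper cites (Lemma 5.6.2 in \cite{ole:03}) into the explicit ``Bessel coefficients plus reconstruction forces the lower frame bound'' argument. One small correction: since $\mathcal{T}_{H,a}$ maps an orthonormal basis to a Riesz basis it is an isomorphism but not an isometry, so $\|F\|^2=\|x\|^2$ should read $\|F\|^2\le \|\mathcal{T}_{H,a}^{-1}\|^2\,\|x\|^2$, which changes only the constant in your lower frame bound and not the conclusion.
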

\begin{proof} Propositions~\ref{caracterizacionframe} and \ref{caracterizacion} $(c)$ prove that conditions $(a)$ and $(b)$ are equivalents.
Proposition~\ref{proposicionany} proves that condition $(a)$ implies condition $(c)$. We have proved above  that condition $(c)$ implies condition  $(d)$. Obviously, condition $(d)$ implies condition $(e)$. As a consequence, we only need  to prove that condition $(e)$ implies condition $(a)$. Applying the isomorphism $\mathcal{T}_{H,a}^{-1}$ to the expansion in $(e)$, and taking into account \eqref{samples} we obtain
\[
\begin{split}
F& =\mathcal{T}_{H,a}^{-1} x =\sum_{j=1}^s \sum_{m\in M}\mathcal{L}_{j} x(m)\,\mathcal{T}_{H,a}^{-1}\big(C_{j,m}\big) \\
& =\sum_{j=1}^s \sum_{m\in M} \big\langle F, \overline{G}_j\, \chi_m \big\rangle_{L^2(\widehat{H})}\,\mathcal{T}_{H,a}^{-1}\big(C_{j,m}\big)\quad \text{in $L^2(\widehat{H})$}\,,
\end{split}
\]
where the sequence $\big\{\mathcal{T}_{H,a}^{-1}\big(C_{j,m}\big) \big\}_{m\in M;\,  j=1,2,\dots s}$ is a frame for $L^2(\widehat{H})$. The sequence $\big\{ \overline{G}_j\, \chi_m \big\}_{m\in M;\,  j=1,2,\dots s}$ is a Bessel sequence in $L^2(\widehat{H})$ since $\beta_{\mathbb{G}}<\infty$, and it satisfies the above expansion in $L^2(\widehat{H})$. As a consequence, according to \cite[Lemma 5.6.2]{ole:03} the sequences  $\big\{ \overline{G}_j\, \chi_m \big\}_{m\in M;\,  j=1,2,\dots s}$ and $\big\{\mathcal{T}_{H,a}^{-1}(C_{j,m}) \big\}_{m\in M;\,  j=1,2,\dots s}$ form a pair of dual frames in $L^2(\widehat{H})$. In particular, by using Proposition~\ref{caracterizacionframe} we obtain that condition~$(a)$ holds, which concludes the proof.
\end{proof}

Whenever $r$ equals the number of $U$-systems $s$ we are in the presence of Riesz bases, and there exists a unique sampling expansion in Theorem \ref{regular}:

\begin{cor}
Let $b_j\in \mathcal{H}$ for $j=1,2,\dots,r$, i.e., $r=s$ in Theorem\,\ref{regular}.  Let $\mathcal{L}_j$ be its associated $U$-system for $j=1,2,\dots, r$. Consider the associated $r\times r$ matrix $\mathbb{G}(\xi)$ given in \eqref{Gmatrix}. The following statements are equivalent:
\begin{enumerate}[(a)]
\item $0<\alpha_{\mathbb{G}} \le \beta_{\mathbb{G}}<\infty$.
\item There exist $r$ unique elements $c_j \in \mathcal{A}_a$, $j=1,2,\dots, r$, such that the sequence
$\big\{U(m)c_j\big\}_{m\in \mathbb{Z};\,j=1,2,\dots r}$ is a Riesz basis for $\mathcal{A}_a$, and for any $x\in \mathcal{A}_a$ the expansion
\begin{equation}
\label{rieszsampling}
x=\sum_{j=1}^r \sum_{m \in M}\mathcal{L}_j x(m)\,U(m)c_j \quad \text{in $\mathcal{H}$}\,,
\end{equation}
holds.
\end{enumerate}
In case the equivalent conditions are satisfied, the interpolation property $\mathcal{L}_{j'} c_j(m)=\delta_{j,j'}\, \delta_{m,0}$, where $m\in M$ and $j, j'=1,2,\dots, r$, holds.
\end{cor}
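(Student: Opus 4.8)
The plan is to read the corollary off from Theorem~\ref{regular} specialized to $s=r$, the new ingredients being Proposition~\ref{caracterizacion}(d) --- which upgrades the frame $\{\overline{G}_j\chi_m\}$ to a Riesz basis precisely when $s=r$ --- together with the observation that, for a \emph{square} matrix $\mathbb{G}(\xi)$ invertible a.e., the vector $\mathbf{H}(\xi)$ solving \eqref{Hcondition} with entries in $L^\infty(\widehat{H})$ is \emph{unique}: in Proposition~\ref{proposicionany} the term $\mathbb{I}_s-\mathbb{G}(\xi)\mathbb{G}^\dagger(\xi)$ vanishes, so $\mathbf{H}(\xi)^\top$ is forced to be the first row of $\mathbb{G}^{-1}(\xi)$, whose entries are bounded because $0<\alpha_{\mathbb{G}}\le\beta_{\mathbb{G}}<\infty$.

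For $(a)\Rightarrow(b)$: condition $(a)$ of the corollary is literally ``$\beta_{\mathbb{G}}<\infty$ and $\alpha_{\mathbb{G}}>0$'', i.e. the standing hypothesis of Theorem~\ref{regular} together with its condition $(b)$; hence Theorem~\ref{regular} supplies elements $c_j=\mathcal{T}_{H,a}(rH_j)\in\mathcal{A}_a$ for which \eqref{rieszsampling} holds and $\{U(m)c_j\}$ is a frame for $\mathcal{A}_a$. By the uniqueness of $\mathbf{H}$ just noted, the $c_j$ are uniquely determined; and since, by Proposition~\ref{caracterizacion}(d), $\{\overline{G}_j\chi_m\}$ is a Riesz basis of $L^2(\widehat{H})$, its canonical dual $\{rH_j\chi_m\}$ (Proposition~\ref{proposicionaes}) is also a Riesz basis, so --- transporting via the isomorphism $\mathcal{T}_{H,a}$ and the shifting property \eqref{sp}, whereby $U(m)c_j=\mathcal{T}_{H,a}(rH_j\chi_m)$ --- the family $\{U(m)c_j\}$ is a Riesz basis for $\mathcal{A}_a$.

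For $(b)\Rightarrow(a)$: write $c_j=\mathcal{T}_{H,a}(\tilde c_j)$; by \eqref{sp} one has $\mathcal{T}_{H,a}^{-1}(U(m)c_j)=\tilde c_j\chi_m$, so $\{\tilde c_j\chi_m\}$ is a Riesz basis of $L^2(\widehat{H})$, and applying $\mathcal{T}_{H,a}^{-1}$ to \eqref{rieszsampling} while invoking \eqref{samples} gives
\[
F=\sum_{j=1}^{r}\sum_{m\in M}\big\langle F,\overline{G}_j\chi_m\big\rangle_{L^2(\widehat{H})}\,\tilde c_j\chi_m\,,\qquad F\in L^2(\widehat{H})\,.
\]
The lower Riesz bound of $\{\tilde c_j\chi_m\}$ forces $\{\overline{G}_j\chi_m\}$ to be a Bessel sequence (so $\beta_{\mathbb{G}}<\infty$ by Proposition~\ref{caracterizacion}(b)), and then \cite[Lemma~5.6.2]{ole:03} identifies $\{\overline{G}_j\chi_m\}$ and $\{\tilde c_j\chi_m\}$ as a dual pair; being the dual of a Riesz basis, $\{\overline{G}_j\chi_m\}$ is itself a Riesz basis, which yields $0<\alpha_{\mathbb{G}}\le\beta_{\mathbb{G}}<\infty$ by Proposition~\ref{caracterizacion}(c),(d). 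Since the dual of a Riesz basis is unique, $\tilde c_j\chi_0$ must equal the corresponding element $rH_j$ of the dual furnished by Proposition~\ref{proposicionaes}, re-confirming uniqueness of the $c_j$.

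Finally, for the interpolation property, using \eqref{samples} with $x=c_j$ and $F=\mathcal{T}_{H,a}^{-1}c_j=rH_j$,
\[
\mathcal{L}_{j'}c_j(m)=\big\langle rH_j,\overline{G}_{j'}\chi_m\big\rangle_{L^2(\widehat{H})}=\big\langle rH_j\chi_0,\overline{G}_{j'}\chi_m\big\rangle_{L^2(\widehat{H})}=\delta_{j,j'}\,\delta_{m,0}\,,
\]
the last equality being the biorthogonality of the dual Riesz bases $\{rH_j\chi_m\}$ and $\{\overline{G}_{j'}\chi_m\}$ --- equivalently, the computation already carried out in the proof of Proposition~\ref{caracterizacion}(d). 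I expect the only delicate point to be the uniqueness assertion in $(b)$: it requires knowing both that $\{\overline{G}_j\chi_m\}$ is a genuine Riesz basis (hence has a unique dual) and that the reconstruction identity pins $\{\tilde c_j\chi_m\}$ down as that dual; the remaining steps are routine transport of frame-theoretic facts through the isomorphism $\mathcal{T}_{H,a}$.
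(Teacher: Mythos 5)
Your proof is correct and follows essentially the same route as the paper: specialize Theorem~\ref{regular} to $s=r$, obtain the unique bounded solution of \eqref{Hcondition} from the invertibility of the square matrix $\mathbb{G}(\xi)$, upgrade frames to Riesz bases via Proposition~\ref{caracterizacion}(d), prove the converse by transporting \eqref{rieszsampling} through $\mathcal{T}_{H,a}^{-1}$ and identifying $\{\overline{G}_j\chi_m\}$ as the (unique) dual Riesz basis, and read off the interpolation property from biorthogonality. The minor variations (using Proposition~\ref{proposicionany} for uniqueness of $\mathbf{H}$, and the Bessel-plus-dual-pair argument in the converse) are equivalent to the paper's reasoning, and your treatment of the uniqueness of the $c_j$ is if anything more explicit than the paper's.
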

\begin{proof}
Assume that $0<\alpha_{\mathbb{G}} \le \beta_{\mathbb{G}}<\infty$; since $\mathbb{G}(\xi)$ is a square matrix, this implies that
\[
\einf_{\xi\in\Omega}|\det \mathbb{G}(\xi)|>0\,.
\]
Therefore, the first row of $\mathbb{G}^{-1}(\xi)$ gives the unique solution $[H_1(\xi), H_2(\xi), \dots, H_r(\xi)]$ of \eqref{aes}
with $H_j\in L^\infty(\widehat{H})$ for $j=1, 2,\ldots, r$. Indeed, other vector  $[\widetilde{H}_1(\xi), \widetilde{H}_2(\xi), \dots, \widetilde{H}_r(\xi)]$ satisfying \eqref{aes}
would give, by defining $\tilde{\mathbb{H}}(\xi)$ as in \eqref{Gmatrix}, another inverse of $\mathbb{G}(\xi)$.

Due to  Theorem \ref{regular}, the sequence $\{U(m) c_j\}_{m\in M;\,j=1,2,\ldots,r}$,
with $c_j=\mathcal{T}_{H,a}(rH_j)$,  satisfies the sampling
formula \eqref{rieszsampling}. Moreover, the sequence $\{rH_j(w)\,\chi_m\}_{m\in M;\,j=1,2,\ldots,r}=\{\mathcal{T}_{H,a}^{-1}\big(U(m)c_j\big)\}_{m\in M;\,j=1,2,\ldots,r}$ is a frame for $L^2(\widehat{H})$.  Since $r=s$,  according to Proposition~\ref{caracterizacion}(d), it is a Riesz basis. Hence,
$\{U(m)c_j\}_{m\in M;\,j=1,2,\ldots,r}$ is a Riesz basis for $\mathcal{A}_a$. It is straightforward to prove the uniqueness of $c_j$, $j=1,2,\dots,r$. 

For the converse result, the isomorphism $\mathcal{T}_{H,a}^{-1}$ in \eqref{rieszsampling} gives the Riesz basis expansion
\[
F=\sum_{j=1}^r \sum_{m \in M}\big\langle F,\overline{G}_j\,\chi_m\big\rangle_{L^2(\widehat{H})}\,\mathcal{T}_{H,a}^{-1}\big[U(m)c_j\big] \quad \text{in $L^2(\widehat{H})$}\,.
\]
Moreover, due to the uniqueness of the coefficients in a Riesz basis, the sequence $\big\{\overline{G}_j\,\chi_m\big\}_{m\in H;\,j=1,2,\ldots,r}$ must be the dual  Riesz basis of $\big\{\mathcal{T}_{H,a}^{-1}[U(m)c_j]\big\}_{m\in H;\,j=1,2,\ldots,r}$. According to Proposition~\ref{caracterizacion}(d), condition $(a)$ holds.

From the uniqueness of the coefficients in a Riesz basis, we get the interpolatory condition
$(\mathcal{L}_{j'}c_j)(m)=\delta_{j,j'}\delta_{m,0}$ for $j,j'=1,2,\ldots,r$ and $m \in M$.
\end{proof}

\subsection*{Some illustrative examples}
The general sampling theory obtained in Section \ref{section3} englobes some previous work in the topic. Namely:
\begin{itemize}
\item The case $G=\mathbb{R}$, $H=\mathbb{Z}$ and $M=r\mathbb{Z}$ $(r\in \mathbb{N})$, with unitary representation $U(n)=U^n$, $n\in \mathbb{Z}$, where $U: \mathcal{H} \rightarrow \mathcal{H}$ is a unitary operator on an abstract Hilbert space $\mathcal{H}$, can be found in Refs. \cite{hector:13,hector:14,pohl:12}. In particular, whenever $U: f(t)\mapsto f(t-1)$ is the shift operator in $L^2(\mathbb{R})$, see \cite{garcia:06}.
\item The case $G={\mathbb{R}}^d$, $H={\mathbb{Z}}^d$ and $M=A{\mathbb{Z}}^d$, where $A$ denotes a $d\times d$ matrix with integer entries and positive determinant, with unitary representation in $L^2({\mathbb{R}}^d)$ given by $U(\alpha):f(t)\mapsto f(t-\alpha)$, 
$\alpha \in {\mathbb{Z}}^d$, can be found in \cite{garcia:09}. 
\item The case  $H=\mathbb{Z}_N$ and $M=r\mathbb{Z}_N$ ($r|N$), with unitary representation $U(n)=U^n$, $n\in \mathbb{Z}_N$, where $U: \mathcal{H} \rightarrow \mathcal{H}$ is a unitary operator on a Hilbert space $\mathcal{H}$ can be found in \cite{garcia:15}. In particular, it includes the case where $U$ is the cyclic shift in $\ell^2_N(\mathbb{Z})$; see also Ref.~\cite{garcia:16}. 
\item The case of the product group $H=\mathbb{Z} \times \mathbb{Z}_N$ and $M=r\mathbb{Z} \times \bar{r}\mathbb{Z}_N$ ($r\in \mathbb{N}$ and  $\bar{r}|N$), with unitary representation $U(n,m)=U^n \otimes V^m$, $(n, m)\in \mathbb{Z}\times\mathbb{Z}_N$, in the tensorial product $\mathcal{H}_1\otimes \mathcal{H}_2$, where $U: \mathcal{H}_1 \rightarrow \mathcal{H}_1$ and $V: \mathcal{H}_2 \rightarrow \mathcal{H}_2$ are unitary operators on the Hilbert spaces $\mathcal{H}_1$ and $\mathcal{H}_2$ respectively, can be found in \cite{garcia2:16}. See also the case  $H=\mathbb{Z}_N \times \mathbb{Z}_M$ and $M=r\mathbb{Z}_N \times \bar{r}\mathbb{Z}_M$ ($r|N$ and  $\bar{r}|M$).
\end{itemize}

\subsection*{Kluv\'anek's sampling theorem for $H$-shift-invariant subspaces in $L^2(G)$} 
\label{section4}
A similar technique to those used in Section \ref{section3} allows to derive a generalization of Kluv\'anek sampling theorem for $H$-shift-invariant subspaces of $L^2(G)$. In \cite{kluvanek:65} Kluv\'anek obtained a generalization of the classical Shannon sampling theorem in terms of abstract harmonic analysis (see also \cite{dodson:07}). On the other hand, classical Paley-Wiener spaces are, in particular, shift-invariant subspaces in $L^2(\mathbb{R})$; thus Walter proved in \cite{walter:92}, for the first time, a sampling theorem for shift-invariant subspaces of $L^2(\mathbb{R})$. Here we reproduce the same result  in the context of abstract harmonic analysis.
As before, $G$ denotes a LCA group and $H<G$ is a uniform lattice in $G$. We take $\mathcal{H}=L^2(G)$ and we consider the (left regular) unitary representation of $G$ given by $g\in G \mapsto L_g$, where $L_gx(g')=x(g'-g)$ for any $g'\in G$ and $x\in L^2(G)$. For a fixed $a\in L^2(G)$ we define the $H$-shift-invariant subspace $\mathcal{A}_a:=\overline{\operatorname{span}}\big\{ L_ha\,:\, h\in H\big\}$. In case the sequence $\{L_ha\}_{h\in H}$ is a Riesz sequence in $L^2(G)$ (see Ref.~\cite{cabrelli:10}), this subspace can be described as 
\[
\mathcal{A}_a=\Big\{\sum_{h\in H}\alpha_h\,a(g-h)\,:\,\{\alpha_h\}\in \ell^2(H)\Big\}\subset L^2(G)\,.
\]
Assuming that $a\in C(G)$ and $\sum_{h\in H}|a(g-h)|^2<\infty$ uniformly in $G$, one can easily derive that the subspace $\mathcal{A}_a$ constitutes a {\em reproducing kernel Hilbert space} (RKHS) of continuous functions on $G$. Thus, convergence in $L^2(G)$-sense implies pointwise convergence which is, in this case, uniform on $G$. Besides, each $x\in \mathcal{A}_a$ can be described as the pointwise sum $x(g)=\sum_{h\in H} \alpha_h\,a(g-h)$, \,$g\in G$. 

Following Ref.~\cite{sun:99} it can be proved, by using the Banach-Steinhaus theorem, that $a\in C(G)$ and $\sum_{h\in H}|a(g-h)|^2<\infty$ uniformly in $G$ is also a necessary condition in order to be $\mathcal{A}_a$ a $H$-shift-invariant space of continuous functions on $G$.

The aim in this section  is to recover any $x\in \mathcal{A}_a$ from the sequence of its samples taken at an orbit of the subgroup $H$, i.e., $\{x(g_0+m)\}_{m\in H}$ where $g_0$ denotes a fixed element in $G$. In this context, the isomorphism $\mathcal{T}_{H,a}$ in \eqref{iso} reads:
\[
\mathcal{T}_{H,a}: L^2(\widehat{H}) \ni F=\sum_{h\in H}\alpha_h\,\chi_h  \longmapsto x(g)=\sum_{h\in H}\alpha_h\,a(g-h)\in \mathcal{A}_a\,,
\]
which satisfies the shifting property $\mathcal{T}_{H,a}(F\chi_m)=L_m(\mathcal{T}_{H,a}F)$, for $F\in L^2(\widehat{H})$ and $m\in H$. We obtain the following expression for the samples of $x=\mathcal{T}_{H,a} F\in \mathcal{A}_a$: 
\[
\begin{split}
x(g_0+m)&=\sum_{h\in H} \langle F, \chi_h\rangle\,a(g_0+m-h)=\big\langle F, \sum_{h\in H}\overline{a(g_0+m-h)}\,\chi_h \big\rangle_{L^2(\widehat{H})}\\
&=\big\langle F, \sum_{h'\in H}\overline{a(g_0+h')}\,\chi_{m-h'} \big\rangle_{L^2(\widehat{H})}=\big\langle F, \big(\sum_{h'\in H}\overline{a(g_0+h')\,\chi_{h'}}\,\big)\chi_{m} \big\rangle_{L^2(\widehat{H})}\\
&= \langle F, \overline{G}_{g_0}\,\chi_m\rangle_{L^2(\widehat{H})}\,,
\end{split}
\]
where $G_{g_0}$ denotes the function
\begin{equation}
\label{g0}
G_{g_0}:=\sum_{h\in H}a(g_0+h)\,\chi_h \in L^2(\widehat{H})\,.
\end{equation} 
Having in mind Proposition~\ref{caracterizacion}(d), the sequence $\big\{ \overline{G}_{g_0}\,\chi_m\big\}_{m\in H}$ is a Riesz basis for $L^2(\widehat{H})$ if and only if
\begin{equation}
\label{sriesz}
0<\einf_{\gamma \in \widehat{H}} |G_{g_0}(\gamma)|\le \esup  _{\gamma \in \widehat{H}} |G_{g_0}(\gamma)| <+\infty\,.
\end{equation}
Note that taking $M=H$ in Proposition~\ref{caracterizacion} we have $\Omega=\widehat{H}$. Moreover, its dual Riesz basis is $\big\{ \big(1/G_{g_0}\big)\,\chi_m\big\}_{m\in H}$. Assuming \eqref{sriesz}, we obtain a sampling formula in $\mathcal{A}_a$ as follows: For each $x=\mathcal{T}_{H,a}^{-1}F\in \mathcal{A}_a$, we expand $F$ in $L^2(\widehat{H})$ as
\[
F=\sum_{m\in H}\langle F, \overline{G}_{g_0}\,\chi_m\rangle\, \frac{1}{G_{g_0}}\,\chi_m=\sum_{m\in H}x(g_0+m)\, \frac{1}{G_{g_0}}\,\chi_m\,.
\]
Then, applying the isomorphism $\mathcal{T}_{H,a}$ and the shifting property we get  in $\mathcal{A}_a$ the sampling formula 
\[
\begin{split}
x(g)&=\sum_{m\in H}x(g_0+m)\,\mathcal{T}_{H,a}\big( \frac{1}{G_{g_0}}\,\chi_m \big)(g)=\sum_{m\in H}x(g_0+m)\, \big(L_m S_{g_0}\big)(g) \\
&= \sum_{m\in H}x(g_0+m)\, S_{g_0}(g-m)\,, \quad g\in G\,,
\end{split}
\]
where the sampling function $S_{g_0}=\mathcal{T}_{H,a} \big(1/G_{g_0} \big)$ belongs to $\mathcal{A}_a$. The  convergence of the series is in the $L^2(G)$-sense, absolute (due to the unconditional convergence of a Riesz basis) and uniform on $G$. In fact, the following result holds:

\begin{teo}
Let $\mathcal{A}_a$ be a $H$-shift-invariant subspace of continuous functions on $G$ generated by a function $a\in L^2(G)$. 
For a fixed $g_0\in G$, consider the function $G_{g_0}\in L^2(\widehat{H})$ given in \eqref{g0}. The following statements are equivalent:
\begin{enumerate}[(i)]
\item $0<\einf_{\gamma \in \widehat{H}} |G_{g_0}(\gamma)|\le \esup  _{\gamma \in \widehat{H}} |G_{g_0}(\gamma)| <+\infty$.
\item  There exists a unique function $S_{g_0}\in\mathcal{A}_a$ such that the sequence $\big\{S_{g_0}(\cdot -m)\big\}_{m\in H}$ is a Riesz basis for $\mathcal{A}_a$, and the following sampling formula holds in $\mathcal{A}_a$:
\[
x(g)=\sum_{m\in H}x(g_0+m)\, S_{g_0}(g-m) \quad \text{in $L^2(G)$}\,.
\]  
\end{enumerate}
Moreover, in case of equivalent conditions the convergence of the above series is absolute and uniform on $G$.
\end{teo}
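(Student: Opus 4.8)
The plan is to push the entire statement through the isomorphism $\mathcal{T}_{H,a}\colon L^2(\widehat{H})\to\mathcal{A}_a$ and reduce it to Proposition~\ref{caracterizacion}(d) applied with $M=H$; in that situation the annihilator $M^\bot$ is trivial, so $r=1$, $\Omega=\widehat{H}$, $m_{\widehat{H}}(\Omega)=1$, and the matrix $\mathbb{G}(\xi)$ of that proposition collapses to the scalar $G_{g_0}(\xi)$. The bridge between the two pictures is the identity $\langle F,\overline{G}_{g_0}\chi_m\rangle_{L^2(\widehat{H})}=x(g_0+m)$, valid for every $x=\mathcal{T}_{H,a}F\in\mathcal{A}_a$, established just above the statement. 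In particular $(i)$ is exactly the condition $0<\alpha_{\mathbb{G}}\le\beta_{\mathbb{G}}<\infty$ for this scalar $\mathbb{G}$, so by Proposition~\ref{caracterizacion}(d) (here $s=r=1$) it is equivalent to $\{\overline{G}_{g_0}\chi_m\}_{m\in H}$ being a Riesz basis for $L^2(\widehat{H})$, and it forces $G_{g_0},1/G_{g_0}\in L^\infty(\widehat{H})$.

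To prove $(i)\Rightarrow(ii)$ I would set $S_{g_0}:=\mathcal{T}_{H,a}(1/G_{g_0})\in\mathcal{A}_a$ (well defined since $1/G_{g_0}\in L^\infty(\widehat{H})\subset L^2(\widehat{H})$). A one-line computation using $\chi_m\overline{\chi_{m'}}=\chi_{m-m'}$ and $\int_{\widehat{H}}\chi_k\,dm_{\widehat{H}}=\delta_{k,0}$ shows that $\{(1/G_{g_0})\chi_m\}_{m\in H}$ is biorthogonal to the Riesz basis $\{\overline{G}_{g_0}\chi_m\}_{m\in H}$, hence is its (unique) dual Riesz basis. Expanding $F=\sum_{m\in H}\langle F,\overline{G}_{g_0}\chi_m\rangle\,(1/G_{g_0})\chi_m=\sum_{m\in H}x(g_0+m)\,(1/G_{g_0})\chi_m$, applying $\mathcal{T}_{H,a}$ and using the shifting property $\mathcal{T}_{H,a}((1/G_{g_0})\chi_m)=L_m S_{g_0}=S_{g_0}(\cdot-m)$ gives the sampling formula; at the same time $\{S_{g_0}(\cdot-m)\}_{m\in H}$, being the image of a Riesz basis under the isomorphism $\mathcal{T}_{H,a}$, is a Riesz basis for $\mathcal{A}_a$. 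For uniqueness, any competing $\widetilde{S}_{g_0}$ as in $(ii)$, pulled back by $\mathcal{T}_{H,a}^{-1}$, would exhibit $\{\overline{G}_{g_0}\chi_m\}$ as biorthogonal to the Riesz basis $\{\mathcal{T}_{H,a}^{-1}(\widetilde{S}_{g_0}(\cdot-m))\}$; uniqueness of the dual Riesz basis then forces $\mathcal{T}_{H,a}^{-1}(\widetilde{S}_{g_0}(\cdot-m))=(1/G_{g_0})\chi_m$, and $m=0$ gives $\widetilde{S}_{g_0}=S_{g_0}$.

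For $(ii)\Rightarrow(i)$ I would apply $\mathcal{T}_{H,a}^{-1}$ to the sampling formula: since $\{S_{g_0}(\cdot-m)\}_{m\in H}$ is a Riesz basis for $\mathcal{A}_a$, the numbers $\{x(g_0+m)\}_{m\in H}=\{\langle F,\overline{G}_{g_0}\chi_m\rangle\}_{m\in H}$ are the unique $\ell^2$ coefficients of $F=\mathcal{T}_{H,a}^{-1}x$ in the Riesz basis $\{\mathcal{T}_{H,a}^{-1}(S_{g_0}(\cdot-m))\}_{m\in H}$ of $L^2(\widehat{H})$; hence $\{\overline{G}_{g_0}\chi_m\}_{m\in H}$ is its dual Riesz basis, in particular a Riesz basis for $L^2(\widehat{H})$, and Proposition~\ref{caracterizacion}(d) returns $(i)$. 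For the final convergence claim: the $L^2(G)$-series is a Riesz-basis expansion, hence unconditionally convergent; since $\mathcal{A}_a$ is a reproducing kernel Hilbert space of continuous functions whose evaluation functionals are uniformly bounded on $G$ (a consequence of $a\in C(G)$ and $\sum_{h\in H}|a(\cdot-h)|^2<\infty$ uniformly on $G$), $L^2(G)$-convergence of the partial sums upgrades to uniform convergence on $G$; and writing $\sum_{m\in H}|S_{g_0}(g-m)|^2=\sum_{m\in H}|\langle k_g,L_m S_{g_0}\rangle|^2\le B\,\|k_g\|_{L^2(G)}^2$, where $B$ is a Bessel bound for $\{L_m S_{g_0}\}_{m\in H}$ (finite because $1/G_{g_0}=\mathcal{T}_{H,a}^{-1}S_{g_0}\in L^\infty(\widehat{H})$) and $\sup_{g}\|k_g\|_{L^2(G)}<\infty$ by the RKHS structure, a Cauchy--Schwarz estimate against $\{x(g_0+m)\}_{m}\in\ell^2(H)$ yields the absolute convergence uniformly on $G$.

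The content is essentially already in place: the hard analytic work is carried by Proposition~\ref{caracterizacion}(d), and what remains is bookkeeping through $\mathcal{T}_{H,a}$. The two places that need care are the explicit identification of the dual Riesz basis as $\{(1/G_{g_0})\chi_m\}_{m\in H}$, which relies on $G_{g_0}^{\pm1}\in L^\infty(\widehat{H})$, and the uniform bound $\sup_{g\in G}\sum_{m\in H}|S_{g_0}(g-m)|^2<\infty$ underlying the uniform and absolute convergence, which is where the standing regularity hypotheses on the generator $a$ (together with $1/G_{g_0}\in L^\infty(\widehat{H})$) are used.
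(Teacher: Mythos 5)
Your proposal is correct and follows essentially the same route as the paper: reduce to the scalar case $r=1$, $\Omega=\widehat{H}$ of Proposition~\ref{caracterizacion}(d), identify $\{(1/G_{g_0})\chi_m\}_{m\in H}$ as the dual Riesz basis to build $S_{g_0}=\mathcal{T}_{H,a}(1/G_{g_0})$ via the shifting property, and prove $(ii)\Rightarrow(i)$ by pulling the formula back with $\mathcal{T}_{H,a}^{-1}$ and invoking uniqueness of Riesz basis coefficients. Your extra Cauchy--Schwarz/RKHS justification of the absolute and uniform convergence only spells out what the paper asserts more briefly.
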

\begin{proof}
We have proved that condition $(i)$ implies condition $(ii)$. Reciprocally, applying the isomorphism $\mathcal{T}_{H,a}^{-1}$ in the sampling formula one gets the expansion
\[
F=\sum_{m\in H} \langle F, \overline{G}_{g_0}\,\chi_m\rangle_{L^2(\widehat{H})}\, \mathcal{T}_{H,a}^{-1}\big[S_{g_0}(\cdot-m)\big] \quad \text{in $L^2(\widehat{H})$}\,,
\]
where the sequence $\big\{\mathcal{T}_{H,a}^{-1}\big[S_{g_0}(\cdot-m)\big]\big\}_{m\in H}$ is a Riesz basis for  $L^2(\widehat{H})$. Due to the uniqueness of the coefficients in a Riesz basis, necessarily the sequence $\big\{\overline{G}_{g_0}\,\chi_m\big\}_{m\in H}$ is its dual  Riesz basis. According to Proposition~\ref{caracterizacion}(d), condition $(i)$ holds.
\end{proof}
Note that, in case the sequence $\{L_ha\}_{h\in H}$ is only a Bessel sequence for $L^2(G)$ condition $(i)$, in the above theorem, implies that the sampling formula holds for every $x=\sum_{h\in H}\alpha_h\,L_h a$ with $\{\alpha_h\}\in \ell^2(H)$.

\subsection*{A note on the $G$-jitter error}
\label{section5}
In the classical Paley-Wiener space $PW_\pi$, the time-jitter error arises when one samples at instants $t_n=n+\epsilon_n$ which differ from the Nyquist sampling instants $n\in \mathbb{Z}$ by $\epsilon_n$ (see, for instance, the classical Ref.~\cite{zayed:93}). Its counterpart here consists of taking the generalized samples at $m+\epsilon_m$ where $m\in M$ and $\epsilon_m \in G$ stands for what we shall call a $G$-jitter error. In what follows, we assume $G$ to be a non-compact continuous group, and $M< H< G$ two countably infinite uniform lattices in $G$.

Given an error sequence $\boldsymbol{\epsilon}:=\{\epsilon_{mj}\}_{m\in M;\, j=1,2,\ldots,s}$ in $G$, proceeding as in \eqref{samples}, the perturbed 
samples in $\big\{\mathcal{L}_jx(m+\epsilon_{mj})\big\}_{m\in M;\, j=1,2,\ldots,s}$ can be expressed as
\[
\mathcal{L}_jx(m+\epsilon_{mj})=\big\langle F,\overline{G}_{j,m} \,\chi_m\big\rangle_{L^2(\widehat{H})}\quad \text{for $m\in M$ and $j=1,2, \dots, s$}\,,
\]
where the function $G_{j,m}:=\sum_{h\in H} \mathcal{L}_ja(h+\epsilon_{mj})\,\chi_h$ belongs to $L^2(\widehat{H})$ for $m\in M$ and $j=1,2,\dots,s$. The idea consists of consider the new sequence $\big\{\overline{G}_{j,m} \,\chi_m\big\}_{m\in M}$ as a perturbation of the frame $\{\overline{G}_j\,\chi_m\}_{m\in M;\,j=1,2,\ldots,s}$  in $L^2(\widehat{H})$. For instance, if we prove that 
\[
\sum_{m\in M}\sum_{j=1}^s\big|\big\langle \overline{G}_{j,m}\, \chi_m-\overline{G}_j\, \chi_m  ,F\big\rangle_{L^2(\widehat{H})}\big|^2 \leq R \,\|F\|^2\quad \text{for each $F\in L^2(\widehat{H})$}
\]
with $R<\alpha_\mathbb{G}/r$, the optimal lower frame bound of the frame $\{\overline{G}_j\,\chi_m\}_{m\in M;\,j=1,2,\ldots,s}$, from 
Corollary 15.1.5 in \cite{ole:03} we obtain that the sequence $\big\{ \overline{G}_{j,m}\,\chi_m \big\}_{m\in M;\,  j=1,2,\dots, s}$ is also a frame for $L^2(\widehat{H})$.
In so doing, assume that the operator 
\[
\begin{array}[c]{ccll}
D_{\boldsymbol{\epsilon}}: & \ell^2(H) & \longrightarrow & \ell^2_s(M)\\
       & c=\{c_h\}_{h\in H} & \longmapsto & D_{\boldsymbol{\epsilon}}\,c:=\big(D_{\boldsymbol{\epsilon},1}\,c,\ldots,D_{\boldsymbol{\epsilon},s}\,c\big)\,,
\end{array}
\]
is well-defined, where, for $j=1, 2, \dots, s$,
\begin{equation}
\label{Dc}
D_{\boldsymbol{\epsilon},j}\,c:=\Big\{\sum_{h\in H}\big[\mathcal{L}_ja(m-h+\epsilon_{mj})-\mathcal{L}_ja(m-h)\big]c_h\Big\}_{m\in M}\,.
\end{equation}
The operator norm (it could be infinity) is defined as usual
\[
\|D_{\boldsymbol{\epsilon}}\|:=\sup_{c\in \ell^2(H)\setminus \{0\}} \frac{\|D_{\boldsymbol{\epsilon}}
\,c\,\|_{\ell^2_s(M)} }{\|c\,\|_{\ell^2(H)}}\,,
\]
where $\|D_\epsilon \,c\,\|^2_{\ell^2_s(M)}:=\sum_{j=1}^s
\|D_{\epsilon,j}\, c\,\|^2_{\ell^2(M)}$ for each $c\in \ell^2(H)$.

\begin{prop}
\label{irregular}
Assume that for the functions $G_j$, $j=1, 2, \dots, s$, given in \eqref{g_j} we have $0<\alpha_{\mathbb{G}}\le \beta_{\mathbb{G}}<\infty$. Let 
$\boldsymbol{\epsilon}:=\{\epsilon_{mj}\}_{m\in M;\,j=1, 2,\ldots,s}$ be an error sequence in $G$ satisfying
the inequality $\|D_{\boldsymbol{\epsilon}}\|^2<\alpha_\mathbb{G}/r$. Then, there 
exists a frame $\{C_{m,j}^{\boldsymbol{\epsilon}}\}_{m\in M;\,j=1,2,\ldots,s}$ for $\mathcal{A}_a$
such that, for any $x\in \mathcal{A}_a$, the sampling expansion
\begin{equation}
\label{firregular}
x=\sum_{j=1}^s \sum_{m\in M} \mathcal{L}_jx(m+\epsilon_{mj}) \, C_{m,j}^{\boldsymbol{\epsilon}} \quad \text{ in $\mathcal{A}_a$}\,,
\end{equation}
holds. Moreover, when $r=s$ the sequence $\{C_{m,j}^{\boldsymbol{\epsilon}}\}_{m\in M;\,j=1,2,\ldots,s}$
is a Riesz basis for $\mathcal{A}_a$, and the interpolation property 
$(\mathcal{L}_l \,C_{n,j}^{\boldsymbol{\epsilon}})(m+\epsilon_{mj})=\delta_{j,l}\,\delta_{n,m}$ holds.
\end{prop}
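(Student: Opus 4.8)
The plan is to realise $\big\{\overline{G}_{j,m}\,\chi_m\big\}_{m\in M;\,j=1,\dots,s}$ as a Paley--Wiener perturbation, in $L^2(\widehat H)$, of the frame $\big\{\overline{G}_j\,\chi_m\big\}_{m\in M;\,j=1,\dots,s}$, and then to transport the associated dual-frame expansion to $\mathcal A_a$ through the isomorphism $\mathcal T_{H,a}$, exactly as in the proof of Theorem~\ref{regular}. First I would record that, since $0<\alpha_{\mathbb G}\le\beta_{\mathbb G}<\infty$, Proposition~\ref{caracterizacion}(c) guarantees that $\big\{\overline{G}_j\,\chi_m\big\}$ is a frame for $L^2(\widehat H)$ whose \emph{optimal} lower frame bound equals $\alpha_{\mathbb G}/r$; this is precisely the quantity against which the perturbation has to be measured, and the one appearing in the hypothesis $\|D_{\boldsymbol\epsilon}\|^2<\alpha_{\mathbb G}/r$.

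The heart of the argument is to identify the discrepancy between the perturbed and the unperturbed analysis coefficients with the action of $D_{\boldsymbol\epsilon}$ on the coefficient sequence. Given $x\in\mathcal A_a$, write $x=\mathcal T_{H,a}F$ with $F=\sum_{h\in H}c_h\chi_h$, so that $\|F\|_{L^2(\widehat H)}=\|c\|_{\ell^2(H)}$. Arguing as in the derivation of \eqref{samples}, one gets $\big\langle F,\overline{G}_{j,m}\,\chi_m\big\rangle_{L^2(\widehat H)}=\mathcal L_jx(m+\epsilon_{mj})=\sum_{h\in H}c_h\,\mathcal L_ja(m-h+\epsilon_{mj})$ and $\big\langle F,\overline{G}_j\,\chi_m\big\rangle_{L^2(\widehat H)}=\mathcal L_jx(m)=\sum_{h\in H}c_h\,\mathcal L_ja(m-h)$; subtracting and comparing with \eqref{Dc} yields $\big\langle F,\overline{G}_{j,m}\,\chi_m-\overline{G}_j\,\chi_m\big\rangle_{L^2(\widehat H)}=(D_{\boldsymbol\epsilon,j}\,c)_m$, hence
\[
\sum_{j=1}^s\sum_{m\in M}\big|\big\langle F,\overline{G}_{j,m}\,\chi_m-\overline{G}_j\,\chi_m\big\rangle_{L^2(\widehat H)}\big|^2=\|D_{\boldsymbol\epsilon}\,c\|^2_{\ell^2_s(M)}\le\|D_{\boldsymbol\epsilon}\|^2\,\|F\|^2_{L^2(\widehat H)}\,.
\]
Since $\|D_{\boldsymbol\epsilon}\|^2<\alpha_{\mathbb G}/r$, Corollary~15.1.5 in \cite{ole:03} applies and shows that $\big\{\overline{G}_{j,m}\,\chi_m\big\}_{m\in M;\,j=1,\dots,s}$ is again a frame for $L^2(\widehat H)$; when $r=s$ the unperturbed family is a Riesz basis by Proposition~\ref{caracterizacion}(d), and the same corollary gives that the perturbed family is a Riesz basis for $L^2(\widehat H)$.

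To conclude, let $\big\{D_{m,j}\big\}_{m\in M;\,j=1,\dots,s}$ be a dual frame of $\big\{\overline{G}_{j,m}\,\chi_m\big\}$ in $L^2(\widehat H)$ (its unique dual Riesz basis when $r=s$). Expanding $F=\sum_{j,m}\big\langle F,\overline{G}_{j,m}\,\chi_m\big\rangle_{L^2(\widehat H)}\,D_{m,j}=\sum_{j,m}\mathcal L_jx(m+\epsilon_{mj})\,D_{m,j}$ in $L^2(\widehat H)$, applying $\mathcal T_{H,a}$, and setting $C_{m,j}^{\boldsymbol\epsilon}:=\mathcal T_{H,a}(D_{m,j})\in\mathcal A_a$ produces the sampling expansion \eqref{firregular}; since $\mathcal T_{H,a}$ is an isomorphism of $L^2(\widehat H)$ onto $\mathcal A_a$, the family $\big\{C_{m,j}^{\boldsymbol\epsilon}\big\}$ is a frame for $\mathcal A_a$, and a Riesz basis when $r=s$. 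In the latter case the interpolation property is obtained by rewriting, through $\mathcal T_{H,a}$ and the sample identity $\mathcal L_jx(m+\epsilon_{mj})=\big\langle\mathcal T_{H,a}^{-1}x,\overline{G}_{j,m}\,\chi_m\big\rangle_{L^2(\widehat H)}$, the biorthogonality relation $\big\langle D_{n,j},\overline{G}_{l,m}\,\chi_m\big\rangle_{L^2(\widehat H)}=\delta_{j,l}\,\delta_{n,m}$ satisfied by the dual Riesz basis.

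I expect the only delicate point to be the bookkeeping: verifying that the map $c\mapsto\big\{\big\langle F,\overline{G}_{j,m}\,\chi_m-\overline{G}_j\,\chi_m\big\rangle_{L^2(\widehat H)}\big\}_{m,j}$ is exactly $D_{\boldsymbol\epsilon}$, so that its norm is controlled by the hypothesis, and, above all, keeping track of the $1/r$ normalisation so that the perturbation threshold $\alpha_{\mathbb G}/r$ matches the \emph{optimal} lower frame bound supplied by Proposition~\ref{caracterizacion}(c). Once these are in place the statement is a direct transcription of the Paley--Wiener perturbation theorem combined with the isomorphism $\mathcal T_{H,a}$.
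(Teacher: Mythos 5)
Your proposal is correct and follows essentially the same route as the paper: identify the discrepancy between perturbed and unperturbed analysis coefficients with the action of $D_{\boldsymbol{\epsilon}}$ on the coefficient sequence, invoke the Paley--Wiener-type perturbation result (Corollary 15.1.5 in \cite{ole:03}) against the optimal lower bound $\alpha_{\mathbb{G}}/r$, and transport a dual-frame expansion to $\mathcal{A}_a$ via $\mathcal{T}_{H,a}$, with the interpolation property in the case $r=s$ coming from biorthogonality (equivalently, uniqueness of Riesz-basis coefficients). The only cosmetic differences are that the paper verifies the perturbation bound by manipulating the characters directly rather than through the sample identity, and it fixes the canonical dual frame where you allow an arbitrary dual.
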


\begin{proof}
 The sequence 
$\big\{ \overline{G}_j\,\chi_m\big\}_{m\in M;\,  j=1,2,\dots, s}$
is a frame (a Riesz basis if $r=s$) for $L^2(\widehat{H})$ with optimal frame (Riesz) bounds
$\alpha_{\mathbb{G}}/r$ and $\beta_{\mathbb{G}}/r$. For any 
$F=\sum_{h \in H} \alpha_h \,\chi_h$ in $L^2(\widehat{H})$, a straightforward calculation gives
\begin{equation}
\begin{split}
&\sum_{m\in M}\sum_{j=1}^s\big|\big\langle \overline{G}_{j,m}\, \chi_m-\overline{G}_j\, \chi_m  ,F\big\rangle_{L^2(\widehat{H})}\big|^2\\
=&\sum_{m\in M}\sum_{j=1}^s\big|\big\langle \sum_{k\in H}\big( \overline{\mathcal{L}_ja(k+\epsilon_{mj})}-\overline{\mathcal{L}_ja(k)} \big)\chi_{-k}\chi_m  ,F\big\rangle_{L^2(\widehat{H})}\big|^2\\
=&\sum_{m\in M}\sum_{j=1}^s\big|\big\langle \sum_{h\in H}\big( \overline{\mathcal{L}_ja(m-h+\epsilon_{mj})}-\overline{\mathcal{L}_ja(m-h)} \big)\chi_h  ,F\big\rangle_{L^2(\widehat{H})}\big|^2\\
=&\sum_{m\in M}\sum_{j=1}^s\big|\sum_{h\in H}\big( \overline{\mathcal{L}_ja(m-h+\epsilon_{mj})}-\overline{\mathcal{L}_ja(m-h)} \big)\,\overline{\alpha}_h\big|^2\\
= &\sum_{j=1}^s \|D_{\epsilon,j}
\{\alpha_h\}_{h\in H}\|^2_{\ell^2(M)}\leq \|D_{\boldsymbol{\epsilon}}\|^2
\|\{\alpha_h\}_{h\in H}\|^2_{\ell^2(H)}= \|D_{\boldsymbol{\epsilon}}\|^2
\|F\|^2_{L^2(\widehat{H})}\,.
\end{split}
\end{equation}
By using Corollary 15.1.5 in \cite{ole:03} we obtain that the sequence 
$\big\{ \overline{G}_{j,m}\,\chi_m \big\}_{m\in M;\,  j=1,2,\dots, s}$ is a frame for $L^2(\widehat{H})$
(a Riesz basis if  $r=s$). Let $\{H_{m,j}^{\boldsymbol{\epsilon}}\}_{m\in M;\,j=1,2,\ldots,s}$ be, for instance,
its canonical dual frame. Hence, for any $F\in L^2(\widehat{H})$
\[
 F=\sum_{m\in M}\sum_{j=1}^s \big \langle F,
\overline{G}_{j,m}\, \chi_m\big\rangle_{L^2(\widehat{H})}\, H_{m,j}^{\boldsymbol{\epsilon}}
= \sum_{m\in M}\sum_{j=1}^s \mathcal{L}_j x(m+\epsilon_{mj})\,H_{m,j}^{\boldsymbol{\epsilon}}\quad \text{in $L^2(\widehat{H})$}\,.
\]
Applying the isomorphism $\mathcal{T}_{H,a}$, one gets \eqref{firregular}, where $C_{m,j}^{\boldsymbol{\epsilon}}:=\mathcal{T}_{H,a}\big(H_{m,j}^{\boldsymbol{\epsilon}}\big)$ for $m\in M$ and $j=1, 2, \dots, s$. Since $\mathcal{T}_{H,a}$ is an isomorphism between $L^2(\widehat{H})$ and 
$\mathcal{A}_a$, the sequence $\{C_{m,j}^{\boldsymbol{\epsilon}}\}_{m\in M;\,j=1,2,\ldots,s}$ is a frame for $\mathcal{A}_a$ (a Riesz basis if $r = s$). The interpolatory property in the case $r=s$ follows from the uniqueness of the coefficients with respect to a Riesz basis.
\end{proof}

Sampling formula \eqref{firregular} is useless from a practical point of view: it is impossible to determine the involved frame 
$\{C_{m,j}^{\boldsymbol{\epsilon}}\}_{m\in M;\,j=1,2,\ldots,s}$. As a consequence, in order to recover $x\in \mathcal{A}_a$ from the sequence of samples $\big\{(\mathcal{L}_j x)(m+\epsilon_{mj})\big\}_{m\in M;\,j=1,2,\ldots,s}$ a frame algorithm in 
$\ell^2(H)$ should be implemented.

\subsubsection*{On the existence of sequences $\boldsymbol{\epsilon}$ in $G$ satisfying $\|D_{\boldsymbol{\epsilon}}\|^2<\alpha_\mathbb{G}/r$} 

Given an error sequence $\boldsymbol{\epsilon}:=\{\epsilon_{mj}\}_{m\in M;\,j=1, 2,\ldots,s}$  in $G$, for  $j=1,2,\dots,s$, $m\in M$ and $h\in H$ we denote
\[
   q_{m,h}^{(j)}:=\mathcal{L}_ja(m-h+\epsilon_{mj})-\mathcal{L}_ja(m-h)\in \mathbb{C}\,.
\] 
Taking into account \eqref{Dc} and proceeding as in Ref~\cite{hector:14}, for any sequence $c=\{c_h\}_{h\in H}$ in $\ell^2(H)$  we have
\begin{equation}
\label{normoperator}
\|D_{\boldsymbol{\epsilon}}\|^2_{\ell^2_s(M)}=\sum_{j=1}^s \sum_{m\in M}
  \Big|\sum_{h\in H} q^{(j)}_{m,h}\ c_h \Big|^2 \le 
  \sum_{j=1}^s \sum_{l\in H} |c_l|^2 \sum_{(m,h) \in M\times H}|q^{(j)}_{m,l}\,q^{(j)}_{m,h}|\,.
\end{equation}
Let $d:G\times G\to [0,+\infty)$ denote an invariant metric in $G$ compatible with its topology. Since $H$ is a uniform lattice of $G$ the infimun of the set $\big\{d(h,h')\,:\, h,h'\in H, h\neq h'\big\}$ is greater than zero. Let  $0<\eta<\inf \{d(h,h')\,:\, h,h'\in H, h\neq h'\}$ and such that 
$\bar{B}_\eta(0)$, the closed ball with radius $\eta$ and centered at $0$, the identity in $G$, is a compact set of $G$. Recall that the so-called {\em oscillation} of the function $\mathcal{L}_j a$ in a  compact set $\mathcal{U}$ is given by (see, for instance, \cite{fuhr:07}):
\[
   \osc_{\mathcal{U}}(\mathcal{L}_j a)(h):=\max_{k\in\mathcal{U}} |\mathcal{L}_ja(h+k)-\mathcal{L}_ja(h)|\,.
\]
For $0\leq \delta\leq \eta$ we define the functions:
\[
M_{a,b_j}(\delta):=\sum_{h\in H} \osc_{\bar{B}_\delta(0)}(\mathcal{L}_j a)(h)\,,
\]
and 
\[
N_{a,b_j}(\delta):=\max_{l\in \Theta}\sum_{m\in M}\osc_{\bar{B}_\delta(0)}(\mathcal{L}_j a)(m-l)\,,
\]
where $\Theta$ is a section of the quotient group $H/M$. Notice that $N_{a,b_j}(\delta)\leq M_{a,b_j}(\delta)$.

If the continuous functions $G\ni g\mapsto \mathcal{L}_ja(g)$, $j=1,2,\dots,s$, satisfy a decay condition as $\mathcal{L}_ja(g)={\rm O}\big(d(0,g)^{-(1+\tau)}\big)$ when $d(0,g)\to\infty$ for some $\tau>0$ we may deduce that the functions $N_{a,b_j}$ and $M_{a,b_j}$ are continuous near to $0$. Taking into account \eqref{normoperator} the condition $\|D_{\boldsymbol{\epsilon}}\|^2<\alpha_\mathbb{G}/r$ in Proposition \ref{irregular} could be rephrased as the condition
\[
   \sum_{j=1}^s N_{a,b_j}(\delta) M_{a,b_j}(\delta)<\frac{\alpha_\mathbb{G}}{r}\,,
\]
for some small enough $\delta >0$, and $d(0,\epsilon_{mj})\le \delta$ for all $j=1, 2, \dots, s$ and $m\in M$.

\medskip

\noindent{\bf Acknowledgments:}
This work has been supported by the grant MTM2014-54692-P from the Spanish {\em Ministerio de Econom\'{\i}a y Competitividad (MINECO)}.


\end{document}